\tikzstyle{nodo}= [circle,draw=black,thick, minimum size=20]
\theoremstyle{plain}
\newtheorem{theorem}{Theorem}[section]
\newtheorem{corollary}{Corollary}[section]
\newtheorem{lemma}{Lemma}[section]
\newtheorem{proposition}{Proposition}[section]
\theoremstyle{definition}
\newtheorem{definition}{Definition}[section]
\theoremstyle{remark}
\newtheorem{remark}{Remark}[section]
\providecommand{\keywords}[1]
{\small	
  \textbf{\textit{Keywords---}} #1}
\title{An analysis of Wardrop equilibrium and social optimum in congested transit networks}
\author[b,c]{Victoria M. Orlando}
\author[a]{Iván L. Degano\footnote{Corresponding author. e-mail: ivandegano@mdp.edu.ar}}
\author[b,c,d]{Pablo A. Lotito}
\affil[a]{UNMdP, Facultad de Ciencias Exactas y Naturales, CEMIM}
\affil[b]{Universidad Nacional del Centro de la Provincia de Buenos Aires, Facultad de Ciencias Exactas, PLADEMA, Tandil, Argentina}
\affil[c]{CONICET, Consejo Nacional de Investigaciones Científicas y Técnicas, Buenos
Aires, Argentina}
\affil[d]{Universidad Nacional del Centro de la Provincia de Buenos Aires, Facultad de Ciencias Exactas, NUCOMPA, Tandil, Argentina}
\date{}
\begin{document}

\maketitle

\begin{abstract}
The effective design and management of public transport systems are essential to ensure the best service for users. The performance of a transport system will depend heavily on user behaviour. In the common-lines problem approach, users choose which lines to use based on the best strategy for them. While Wardrop equilibrium has been studied for the common-lines problem, no contributions have been made towards achieving the social optimum. In this work, we propose two optimisation problems to obtain this optimum, using strategy flow and line flow formulations. We prove that both optimisation problems are equivalent, and we obtain a characterisation of the social optimum flows. The social optimum makes it possible to compute the price of anarchy (PoA), which quantifies the system's efficiency. The study of the PoA enables the effective design and management of public transport systems, guaranteeing the best service to users.   
\end{abstract} \hspace{10pt}

\keywords{System/social optimum; Wardrop equilibrium; Transit assignment; Price of anarchy; Common-lines problem}

\section{Introduction}

The study of public transport modelling is crucial for designing and managing efficient, sustainable, and equitable public transport systems. Recent advances in the area have provided valuable tools for modelling public transport systems and facilitating the evaluation of their performance (\cite{Skhosana2021, Nnene2023, Spengler2023}). Many factors influence people's behaviour and their decision to use public transport. One of them is the fare for the service, so it is important to design a fare system that is accessible to all. This will also benefit the system, as attracting more passengers helps to cover costs, thus ensuring the sustainability of the service (\cite{sipus2022defining}). In addition, the desire to use public transport more frequently is affected by satisfaction with the service provided. It is therefore important to know the preferences and attitudes of users when modelling public transport systems, to ensure an attractive and efficient service (\cite{vos2020modeling}). 

Knowing how users are distributed among the different routes and modes of transport is essential for efficient resource allocation, congestion management and service planning. Technological advances allow obtaining part of this information from smart card data (\cite{Agard2006}). Knowing user behaviour helps to decide, for example, where to invest in new public transport lines or stations to meet future demand (\cite{Shakeel2019}). In short, anticipating user behaviour is critical to creating a transportation network that is efficient, resilient, and responsive to the changing needs and preferences of its users. Sometimes, users decide on their travel itinerary based primarily on the time they will need to spend to complete it. This travel time is affected by the in-vehicle travel time and by the waiting time at the origin stop. \cite{Chriqui1975} proposed the \textit{common bus lines} problem, where it is assumed that passengers consider a set of lines to make their trip and will take the one that arrives first at the stop. The common-lines problem has been extensively studied over the years (\cite{spiess1984contributions, SPIESS198983, cominetti2001common}), leading to significant advancements in the formulation and resolution of the flow assignment problem.  

In transportation modelling, two fundamental concepts consider different perspectives on network efficiency and user behaviour: Wardrop Equilibrium (or User Equilibrium) and System Optimum (see \cite{correawardrop} and the references therein). Wardrop Equilibrium assumes that travel times on all used paths between an origin and a destination are equal and less than travel times on unused paths. In other words, users can not improve their own travel time by unilaterally changing routes. On the other hand, System Optimum represents a traffic assignment in which the overall cost of the system is minimised, assuming that users behave cooperatively seeking the best system performance. These principles are crucial for modelling travellers' route choices and understanding the efficiency of transportation networks.

A relevant metric to quantify the system inefficiency that arises when users make selfish and non-cooperative decisions is the price of anarchy, which consists of the ratio between the cost at equilibrium and the optimal system cost. Understanding and mitigating the price of anarchy is crucial for transportation planners and policymakers because it provides information on the discrepancy between individual and collective optimisation. Strategies and interventions can then be devised to align user behaviour with system efficiency, thus reducing congestion and delays and overall improving the sustainability of transportation networks.

The price of anarchy has been studied in different networks with flow circulation, such as the Internet (\cite{Papadimitriou2001}), road networks (\cite{Youn2008}, \cite{Connors}, \cite{Zhang2016}, \cite{Zang2018}) and public services (\cite{KNIGHT2013122}). Our objective is to study the price of anarchy in the common bus lines problem. For this, we need to know the flow assignment according to the Wardrop Equilibrium and according to the social optimum. The former has been studied in the literature (\cite{cominetti2001common, CCF}), but to our knowledge, no work has addressed the social optimum applied to the common-lines problem. In this work, we propose two optimisation problems that allow us to obtain the social optimum in the common-lines problem, and we arrive at a solution characterisation similar to the one existing in the literature for the equilibrium case. Our work focuses on simple networks, considering only two nodes, and is intended to begin extending the study to general networks in the future. Once we have the solution characterisation for the social optimum, we analyse this approach in two small examples. This, together with the study of the equilibrium assignment, allows us to analyse the evolution of the price of anarchy as demand increases. In this way, we can study when the network becomes inefficient, which allows us to evaluate the consequences of selfish user behaviour.

The work is organised as follows: Section \ref{sec:CLP} presents the common-lines problem, the Wardrop Equilibrium and its characterisation and defines the social optimum in this context. Section \ref{sec:SO} is devoted to obtaining the solution characterisation for the social optimum. Once we have both characterisations (equilibrium and social optimum), we can define in Section \ref{sec:poa} the price of anarchy. Next, in Section \ref{sec:numerical_implementation} we present two examples, in which the social cost and the price of anarchy are analysed, considering the same network with different frequency functions. Finally, we present an appendix to develop some demonstrations in detail.

\section{The common-lines problem} 
\label{sec:CLP}

\cite{Chriqui1975} introduced the common-lines approach to study the flow assignment on public transport networks where lines share some route sections. In this problem, users must select a set of lines that they probably use, and choose the one that involves the shortest expected travel time. Over the years, the study of the common-lines problem has been extended considering general networks, introducing the concept of \textit{strategy} adopted by the users, and considering frequency functions that depend on the flow using the line (\cite{spiess1984contributions, SPIESS198983, cominetti2001common}). In this section, we introduce the common-lines problem and two models of passenger assignment. The first one assumes selfish behaviour (Wardrop Equilibrium) and the second one cooperative behaviour (social optimum). We also provide the characterisation of the equilibrium solution obtained by \cite{cominetti2001common}.

We consider the common-lines problem as described in \cite{cominetti2001common}. In the simplest case, it consists of an origin $O$ connected to a destination $D$ by a finite set of bus lines $A=\{a_1, \dots, a_n\}$ (see Figure \ref{fig:simplest_network}). They denote by $v_a$ the flow on each line $a \in A$, consider constant in-vehicle travel time $t_a \geq 0$ and a smooth effective frequency function $f_a:\left[0, \bar{v}_a\right) \rightarrow(0, \infty)$ with $f'_a(v_a)<0$ and $f_a(v_a) \to 0$ when $v_a \to \bar{v}_a$. A frequency function with these characteristics will reflect an increasing waiting time as the line is used by more passengers. The constant $\bar{v}_a >0$ is called the line saturation flow (eventually $\bar{v}_a = \infty$ for some links, including walking links).

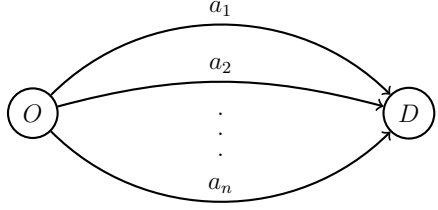
\begin{figure}
    \centering
\begin{tikzpicture}[thick,scale=1, every node/.style={scale=0.8}]
\node at (0,0) [nodo] (O) {$O$};
\node at (5,0) [nodo] (D) {$D$};
\draw [->] (O) to [bend left=45] node[above] {$a_1$} (D);
\draw [->] (O) to [bend left=15] node[above] {$a_2$} (D);
\draw [draw=none] (O) to [bend right=5] node[above] {$.$} (D);
\draw [draw=none] (O) to [bend right=15] node[above] {$.$} (D);
\draw [draw=none] (O) to [bend right=25] node[above] {$.$} (D);
\draw [->] (O) to [bend right=45] node[above] {$a_n$} (D);
\end{tikzpicture}
\caption{The simplest network to analyse the common-lines problem.}
\label{fig:simplest_network}
\end{figure}

To travel from $O$ to $D$, each passenger selects a non empty subset of lines $s \subseteq A$, called the \textit{attractive lines}
or \textit{strategy}, boarding the first incoming bus from this set with available capacity. If $\mathcal{S}$ represents the set of possible strategies, then the number of passengers travelling from $O$ to $D$, denoted by $x \ge 0$, splits among all possible strategies $s \in \mathcal{S}$. That is, $x=\sum_{s \in \mathcal{S}} h_s$, where $h_s$ is the flow of strategy $s \in \mathcal{S}$. Assuming that a passenger using strategy $s$ boards the line $a \in s$ with probability $\pi_a^s=f_a\left(v_a\right) / \sum_{b \in s} f_b\left(v_b\right)$, it turns out that each strategy flow vector $(h_s)_{s\in \mathcal{S}}$ induces a unique vector of line flows $(v_a)_{a\in A}$ through the system of equations
\begin{equation}\label{eq:v}
    v_a = \sum_{s \ni a}h_s \frac{f_a(v_a)}{\sum_{b \in s}f_b(v_b)}, \hspace{3mm} \forall a \in A.
\end{equation}
The expected transit time of each strategy $s$, including the waiting time and in-vehicle travel time, can be represented using this line-flow vector $v$ by the following equation:
\begin{equation}\label{eq:strategycost}
     T_s(v) = \frac{1+ \sum_{a \in s}t_{a}f_{a}(v_{a})}{\sum_{a \in s}f_{a}(v_{a})}.
\end{equation}

To simplify notation, we assume throughout this work a scenario with $n$ bus lines $A=\{1,\dots,n\}$ connecting an origin $O$ to a destination $D$, each one characterised by the in-vehicle travel time $t_i$ and frequency $f_i$. Then, we can rewrite equation \eqref{eq:v} as
\begin{equation}\label{eq:vdelta}
    v_i = \sum_{s \in \mathcal{S}} h_s \delta_{si} \frac{f_i(v_i)}{\sum_{j \in s}f_j(v_j)}, \hspace{3mm} \forall i=1, \dots, n,
\end{equation}
where
$$ \delta_{si}= \begin{cases}
    1 & \text{if } i\in s,\\
    0 & \text{if } i\notin s.
\end{cases}$$

According to Wardrop’s first principle, flows are assigned along strategies with minimal (and therefore equal) cost. Next, we present the formal definition proposed in \cite{cominetti2001common}.

\begin{definition}
It is said that a strategy-flow vector $h \ge 0$ with $\sum_{s \in \mathcal{S}} h_s=x$ is a \emph{user equilibrium} (UE) for the common-lines problem if and only if all strategies carrying flow are of minimal time, that is,
\begin{equation}    
h_{s} >0 \Rightarrow T_{s}(v(h)) =\widehat{T}(v(h)),
\end{equation}
where $\widehat{T}(v(h))=\min_{s\in\mathcal{S}}T_s(v(h))$.
\end{definition} 

Throughout the work, we will refer to the User Equilibrium as the Wardrop Equilibrium. The set of strategy flows satisfying the Wardrop Equilibrium condition is denoted by $H_x$, while $V_x$ represents the set of induced line flows $v(h)$ corresponding to all $h \in H_x$.
The set $V_x$ was characterised in \cite{cominetti2001common} as the optimal solution set of an equivalent optimisation problem, which implies the existence of a constant $\alpha_x \ge 0$ such that $v \in V_x$ if and only if $v \ge 0$ with $\sum_{i=1}^n v_i=x$ and 
\begin{equation} \label{eq:uschar}
v_i = \begin{cases} 0 & \text { if } t_i>\widehat{T}\left(w\left(\alpha_x\right)\right),\\
w_i(\alpha_x) & \text { if } t_i<\widehat{T}\left(w\left(\alpha_x\right)\right), \\ 
0 \leq v_i \le w_i(\alpha_x)  & \text { if } t_i=\widehat{T}\left(w\left(\alpha_x\right)\right), \end{cases}
\end{equation}
considering $w(\alpha) = (w_i(\alpha))_{i=1}^n$, where $w_i(\cdot)$ is the inverse of the differentiable and strictly increasing function $v_i \mapsto v_i / f_i\left(v_i\right)$. Here, the value $v_i/f_i(v_i)$ represents the social waiting cost of line $i$ for the flow $v_i$. In Appendix \ref{app:w} we give some properties of the functions $w_i$ that will be useful in what follows.

Now we define the social optimum as the feasible flow vector that minimises the system's total cost, given by summing over all the strategies the product of the flows $h_s$ times the expected transit times $T_s$.
\begin{definition}
    A strategy-flow vector $h \ge 0$ is a \emph{social optimum} (SO) for the common-lines problem if and only if it is the optimal solution of the following optimisation problem:
     \begin{equation}\label{eq:social_optimimum_original}
        \begin{aligned}
        \min_{h} \quad &\sum_{s\in\mathcal{S}}h_sT_s(v(h))\\
        \textrm{s.t.} \quad& h_s\geq0, &&  s\in \mathcal{S},\\
        &\sum_{s\in \mathcal{S}} h_s=x.
        \end{aligned}
    \end{equation}
\end{definition}

\section{Existence and characterisation of social optimum}\label{sec:SO}

Our goal in this section is to prove the existence of a social optimum $h$ and characterise the corresponding line flows $v(h)$ as the optimal solution of an equivalent optimisation problem. We will provide an analogous characterisation to \eqref{eq:uschar} for the social optimum in the common-lines problem \eqref{eq:social_optimimum_original}.

From \eqref{eq:strategycost}, we can rewrite the above definition as follows:
\begin{equation}
    \begin{aligned}
    \min_{h} \quad &\sum_{s\in\mathcal{S}}\frac{h_s+\sum_{i\in s} h_s t_if_i(v_i(h))}{\sum_{i\in s} f_i(v_i(h))}\\
    \textrm{s.t.} \quad& h_s\geq0, \quad  s\in \mathcal{S},\\
    &\sum_{s\in \mathcal{S}} h_s=x.
    \end{aligned}
\end{equation}
By \eqref{eq:vdelta},
$$ \sum_{s\in\mathcal{S}} \sum_{i\in s} h_s t_i\frac{ f_i(v_i(h))}{\sum_{j\in s} f_j(v_j(h))} = \sum_{i=1}^n \sum_{s \in \mathcal{S}} t_i h_s \delta_{si} \frac{ f_i(v_i(h))}{\sum_{j\in s} f_j(v_j(h))} = \sum_{i=1}^n t_i v_i(h),$$
then we can include $v_i(h)$ in the constraints, thus obtaining
\begin{equation}
    \begin{aligned}
    \min_{v,h} \quad &\sum_{s\in\mathcal{S}}\frac{h_s}{\sum_{i\in s}f_i(v_i)}+\sum_{i=1}^n t_iv_i\\
    \textrm{s.t.} \quad& h_s\geq0, \quad  s\in \mathcal{S},\\
    &\sum_{s\in \mathcal{S}} h_s=x,\\
    &\sum_{s \in \mathcal{S}} h_s \delta_{si} \frac{ f_i(v_i)}{\sum_{j\in s} f_j(v_j)}=v_i.
    \end{aligned}
\end{equation}

Now, we can write this problem as a minimisation problem in $v$ that includes a minimisation in $h$, that is,
\begin{equation}\label{eq:SOv}
    \begin{aligned}
    \min_{v} \quad &\Phi(v)+\sum_{i=1}^n t_iv_i\\
    \textrm{s.t.} \quad& v_i\geq0, &&  i=1, \dots, n,\\
    &\sum_{i=1}^n v_i=x,
    \end{aligned}
\end{equation}
where $\Phi(v)$ is the optimal value of the following optimisation problem parameterised by $v$:
\begin{equation}
    \begin{aligned}
    \min_{h} \quad &\sum_{s\in\mathcal{S}}\frac{h_s}{\sum_{i \in s }f_i(v_i)}\\
    \textrm{s.t.} \quad& h_s\geq0, &&  s\in \mathcal{S},\\
    &\sum_{s\in \mathcal{S}} h_s=x,\\
    &\sum_{s \in \mathcal{S}} h_s \delta_{si} \frac{ f_i(v_i(h))}{\sum_{j\in s} f_j(v_j(h))}=v_i && i=1,\dots,n.
    \end{aligned}
\end{equation}
Because it can be shown that $\sum_{s \in \mathcal{S}} h_s = \sum_{i=1}^n v_i$ and in \eqref{eq:SOv} it is requested that $\sum_{i =1}^n v_i = x$, we can remove the constraint $\sum_{s \in \mathcal{S}} h_s =x$ from $\Phi$. Furthermore, to simplify the expression of $\Phi$, we define $\tau_s = (\sum_{i \in s} f_i(v_i))^{-1}$, and we divide by $f_i(v_i)$ the last constraint, obtaining:
\begin{equation}\label{eq:Phi}
    \begin{aligned}
    \min_{h} \quad &\sum_{s\in\mathcal{S}} h_s\tau_s\\
    \textrm{s.t.} \quad& h_s\geq0, &&  s\in \mathcal{S},\\
    &\sum_{s \in \mathcal{S}} h_s \delta_{si} \tau_s = \frac{v_i}{f_i(v_i)},&&  i=1,\dots,n.
    \end{aligned}
\end{equation}

The existence of the social optimum depends on the following characterisation of the function $\Phi$.
\begin{theorem}
    The optimal value of problem \eqref{eq:Phi} is given by
    $$ \Phi(v) = \max_i \left(\frac{v_i}{f_i(v_i)}\right).$$
\end{theorem}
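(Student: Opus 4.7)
The plan is to establish both a lower bound and a matching upper bound for the linear program \eqref{eq:Phi}. Write $u_i := v_i/f_i(v_i)$ for brevity, and let $M := \max_i u_i$ with $i^\star \in \arg\max_i u_i$.

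For the lower bound, I would exploit the equality constraint for the index $i^\star$. Since $h_s \geq 0$, $\tau_s > 0$ and $\delta_{s i^\star} \in \{0,1\}$, I can drop the indicator to get
\begin{equation*}
\sum_{s\in\mathcal{S}} h_s \tau_s \;\geq\; \sum_{s\in\mathcal{S}} h_s \delta_{s i^\star}\tau_s \;=\; \frac{v_{i^\star}}{f_{i^\star}(v_{i^\star})} \;=\; M,
\end{equation*}
valid for every feasible $h$. Hence $\Phi(v) \geq M$.

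For the matching upper bound I would exhibit a feasible $h$ whose objective equals $M$. After reordering the lines so that $u_1 \geq u_2 \geq \cdots \geq u_n$, consider the nested strategies $s_k := \{1,2,\dots,k\}$ for $k=1,\dots,n$, and set $h_{s_k} := (u_k - u_{k+1})/\tau_{s_k} = (u_k - u_{k+1})\sum_{i=1}^{k} f_i(v_i)$, with the convention $u_{n+1}=0$, and $h_s := 0$ for every other strategy. Monotonicity of the $u_i$'s guarantees $h_{s_k}\geq 0$. For every $j$, only the strategies $s_k$ with $k\geq j$ contain $j$, so
\begin{equation*}
\sum_{s\in\mathcal{S}} h_s\,\delta_{sj}\,\tau_s \;=\; \sum_{k=j}^{n} (u_k - u_{k+1}) \;=\; u_j \;=\; \frac{v_j}{f_j(v_j)},
\end{equation*}
verifying feasibility. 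The objective telescopes to $\sum_{k=1}^{n}(u_k-u_{k+1}) = u_1 = M$, closing the gap.

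There is no serious obstacle: the construction is explicit and the verification is a telescoping sum. The only subtlety is recognising that flow should be placed only on strategies containing the maximiser $i^\star$, which is precisely what complementary slackness against the trivial dual solution $\lambda_{i^\star}=1$, $\lambda_i=0$ otherwise, would predict; the nested construction is a convenient way to realise this while simultaneously matching every remaining equality constraint. Corner cases (several ties for the maximum, or $v_i=0$ for some $i$) are automatically handled since $u_k - u_{k+1} = 0$ simply turns off the corresponding strategy.
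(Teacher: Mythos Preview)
Your proof is correct. The optimal solution you construct---placing flow only on the nested strategies $\{1\},\{1,2\},\dots,\{1,\dots,n\}$ with $h_{s_k}\tau_{s_k}=u_k-u_{k+1}$---is exactly the one the paper arrives at, but the routes differ. The paper reaches it by a recursive greedy argument: it argues informally that one should ``bet everything'' on the full strategy $\bar s$ since $\tau_{\bar s}\le\tau_s$, pins down $h_{\bar s}\tau_{\bar s}=u_n$ from the tightest constraint, peels off line $n$, and repeats on the reduced problem until the objective accumulates to $u_1$. Your argument is more direct: a one-line lower bound from the single equality constraint at $i^\star$ (which is effectively LP weak duality with the dual certificate $\lambda_{i^\star}=1$, $\lambda_i=0$ otherwise), followed by writing down the primal witness and checking it by telescoping. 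Your version is shorter and rigorous where the paper's ``it is convenient to bet everything on $\tau_{\bar s}$'' step is heuristic; the paper's version has the advantage of showing how the nested construction is discovered rather than pulled out of a hat.
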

\begin{proof}
Problem \eqref{eq:Phi} is a linear optimisation problem in $h$ that can be solved by inspection. To simplify the proof, we assume that the values of $\frac{v_i}{f_i(v_i)}$ are decreasing, i.e., 
 $$ \frac{v_i}{f_i(v_i)} \ge \frac{v_{i+1}}{f_{i+1}(v_{i+1})} \quad \forall i=1,\dots,n-1.$$ 
We denote by $\bar{s}$ the strategy that includes all the lines. Since the objective is to minimise the function
$$\sum_{s \in \mathcal{S}} h_s \tau_s = \sum_{\substack{s \in \mathcal{S} \\ s \neq \bar{s} }} h_s \tau_s + h_{\bar{s}}\tau_{\bar{s}}$$
and $\tau_{\bar{s}} \leq \tau_{s}$ for all $s \in \mathcal{S}$, it is convenient to bet everything on $\tau_{\bar{s}}$. This means that it is desirable that as much of the flow as possible uses the strategy $\bar{s}$. That is, we seek $h_{\bar{s}} \geq h_{s}$ for all $s \in \mathcal{S}$. 

From the last constraint of problem \eqref{eq:Phi}, we have that
$$\sum_{s \in \mathcal{S}} \delta_{si} h_s \tau_s = \frac{v_i}{f_i(v_i)} \quad \forall i \in A.$$
Furthermore, because it is a sum of non-negative terms, we have
$$ \delta_{\bar{s}i} h_{\bar{s}} \tau_{\bar{s}} \leq \sum_{s \in \mathcal{S}} \delta_{si} h_s \tau_s,$$
and since $i \in \bar{s}$ for all $i =1,\dots,n$ (so $\delta_{\bar{s}i}=1$ for all $i =1,\dots,n$), we know that 
$$h_{\bar{s}} \tau_{\bar{s}} \leq \frac{v_i}{f_i(v_i)} \quad \forall i =1,\dots,n.$$
From this, we can conclude that the largest value that $h_{\bar{s}}\tau_{\bar{s}}$ can take is the minimum of $\frac{v_i}{f_i(v_i)},$ that is, $\frac {v_n}{f_n(v_n)}$. However, because we have established that $h_{\bar{s}}$ must take the largest possible value and since $\tau_{\bar{s}}$ is fixed, must be $h_{\bar{s}}\tau_{\bar{s}}=\frac {v_n}{f_n(v_n)}$.
Returning to the last constraint of problem \eqref{eq:Phi}, we have that
$$\sum_{s \in \mathcal{S}} \delta_{sn}h_s \tau_s = \frac{v_n}{f_n(v_n)},$$
from where we can obtain
$$\frac{v_n}{f_n(v_n)} = \sum_{ s \neq \bar{s} } \delta_{sn}h_s \tau_s + \delta_{\bar{s}n}h_{\bar{s}} \tau_{\bar{s}} = \sum_{ s \neq \bar{s} } \delta_{sn}h_s \tau_s + \frac {v_n}{f_n(v_n)}$$
and we conclude that
\begin{equation}\label{eq:T2.1_1}
   \sum_{ s \neq \bar{s} } \delta_{sn}h_s \tau_s=0. 
\end{equation}
Furthermore, for any line $i \neq n$, it must be satisfied 
$$\frac{v_i}{f_i(v_i)} = \sum_{s \in \mathcal{S}} \delta_{si}h_s \tau_s = \sum_{ s \neq \bar{s} } \delta_{si}h_s \tau_s + \delta_{\bar{s}i}h_{\bar{s}} \tau_{\bar{s}} = \sum_{ s \neq \bar{s} } \delta_{si}h_s \tau_s + \frac {v_n}{f_n(v_n)}$$
and we have that
\begin{equation}\label{eq:T2.1_2}
    \sum_{ s \neq \bar{s} } \delta_{si}h_s \tau_s = \frac {v_i}{f_i(v_i)} - \frac {v_n}{f_n(v_n)}.
\end{equation}
Up to now, we know that must be $h_{\bar{s}}^{*}\tau_{\bar{s}}= \frac{v_n}{f_n(v_n)}$ and  from \eqref{eq:T2.1_1} we can conclude that $h_s=0$ for each $s \neq \bar{s}$ such that $n \in s$. We can then rewrite the problem \eqref{eq:Phi} as

\begin{equation*}
    \begin{aligned}
    \min_{h} \quad &\sum_{\substack{s \in \mathcal{S} \\ n \notin s}} h_s\tau_s + \frac{v_n}{f_n(v_n)} \\
    \textrm{s.t.} \quad& h_s\geq0, &&  s\in \mathcal{S}, n \notin S, \\
    &\sum_{\substack{s \in \mathcal{S} \\ n \notin s}} \delta_{si}h_s \tau_s = \frac{v_i}{f_i(v_i)}-\frac{v_n}{f_n(v_n)},&&  i \in \{1,\dots, n-1\}.
    \end{aligned}
\end{equation*}
To obtain the flow of the remaining strategies (those that do not contain line $n$), it is sufficient to see that the problem remains the same but with one less line. Now, the line $n$ is not considered, and we have new independent terms given by $\frac{v_i}{f_i(v_i)}-\frac{v_n}{f_n(v_n)}$ maintaining the decreasing property. Repeating the same procedure, we obtain that $h^*_{\tilde{s}}\tau_{\tilde{s}} =  \frac{v_{n-1}}{f_{n-1}(v_{n-1})}-\frac{v_n}{f_n(v_n)}$, where $\tilde{s}$ is the strategy that contains all lines except $n$. The new minimisation problem is as follows:
\begin{equation*}
    \begin{aligned}
    \min_{h} \quad &\sum_{\substack{s \in \mathcal{S} \\ n-1,n \notin s}} h_s\tau_s + \frac{v_{n-1}}{f_{n-1}(v_{n-1})} \\
    \textrm{s.t.} \quad& h_s\geq0, &&  s\in \mathcal{S}, n-1,n \notin s,\\
    &\sum_{\substack{s \in \mathcal{S}\\ n-1,n \notin s}} \delta_{si} h_s \tau_s = \frac{v_i}{f_i(v_i)}-\frac{v_n}{f_n(v_n)}-\frac{v_{n-1}}{f_{n-1}(v_{n-1})},&&  i \in \{1,\dots, n-2\}.
    \end{aligned}
\end{equation*}
Continuing this procedure, the optimal value is $\frac{v_{1}}{f_{1}(v_{1})}$ in the ordered case or $\underset{i}{\max} \frac{v_{i}}{ f_i (v_i) }$ in the general case.

\end{proof}

Calling $\alpha=\underset{i}{\max} \left(\frac{v_i}{f_i(v_i)}\right)>0$ and applying the above Theorem to problem \eqref{eq:SOv}, we have 
\begin{equation}\label{eq:social_optimum_without_w}
    \begin{aligned}
    \min_{v,\alpha} \quad &\sum_{i=1}^n t_iv_i + \alpha\\
    \textrm{s.t.} \quad& v_i\geq0, &&  i =1,\dots,n,\\
    &\sum_{i=1}^n v_i=x,\\
    & \frac{v_i}{f_i(v_i)}\leq\alpha, &&  i =1,\dots,n.
    \end{aligned}
\end{equation}
This formulation is quite similar to that given in Section 1.2 of \cite{cominetti2001common} for the UE problem, considering the function $h$ proposed there as the identity. Since $w_i(\cdot)$ is the inverse function of $v_i \mapsto v_i / f_i\left(v_i\right)$, we finally have a new linear optimisation problem with non-linear constraints that do not include the strategy flow variables $h_s$: 
\begin{equation} \label{eqn:opt}
    \begin{aligned}
    \min_{v,\alpha} \quad &\sum_{i=1}^n t_iv_i + \alpha\\
    \textrm{s.t.} \quad &\sum_{i=1}^n v_i=x,\\
    & 0\leq v_i\leq w_i(\alpha), &&  i =1,\dots,n.
    \end{aligned}
\end{equation}
This problem is convex whenever $-w_i$ is a convex function (in other words, whenever $w_i$ is concave). As we will see below, the chosen frequency function satisfies the condition stated in Proposition \ref{prop:concavity_w}, so that $w_i$ turns out to be a concave function, and the problem \eqref{eqn:opt} is convex.

For this problem, the Karush-Kuhn-Tucker (KKT) conditions implies that there exist multipliers $\lambda, \mu^1_i\geq0, \mu^2_i\geq0$ such that 
\begin{align}
    &\mu_i^1v_i=0 \mbox{ and } \mu_i^2(v_i-w_i(\alpha))=0 \mbox{ for } i =1,\dots,n,\label{eq:kkta} \tag{KKT-a}\\
    &t_i-\lambda-\mu^1_i+\mu^2_i=0, \mbox{ for } i =1,\dots,n, \label{eq:kktb} \tag{KKT-b}\\
    &1-\sum_{i=1}^n \mu_i^2w'_i(\alpha) = 0. \label{eq:kktc} \tag{KKT-c}
\end{align}
Since $\alpha>0$ we have $w_i(\alpha)>0$ for all $i =1,\dots,n$, and then \eqref{eq:kkta} implies $\mu_i^1$ and $\mu^2_i$ cannot both be positive. Furthermore, \eqref{eq:kktb} is equivalent to $ t_i-\lambda=\mu^1_i-\mu^2_i$. Then, combining both facts, yields $\mu^1_i=(t_i-\lambda)_+$ and $\mu^2_i=(\lambda-t_i)_+$. Then equation \eqref{eq:kktc} becomes 
\begin{equation} \label{eq:lambdaalpha}
    \sum_{i=1}^n(\lambda-t_i)_+w'_i(\alpha) = 1
\end{equation}
and we get the following system
\begin{equation} \label{eq:kktsystem}
    \left\lbrace \begin{array}{ll}
        \mu_i^1v_i=0 &  \mbox{ for } i =1,\dots,n, \\
         \mu_i^2(v_i-w_i(\alpha))=0 &  \mbox{ for } i =1,\dots,n, \\
         \mu^1_i=(t_i-\lambda)_+ &  \mbox{ for } i =1,\dots,n, \\
         \mu^2_i=(\lambda-t_i)_+ &  \mbox{ for } i =1,\dots,n, \\
         \sum_{i=1}^n(\lambda-t_i)_+w'_i(\alpha) = 1. &
    \end{array} \right.
\end{equation}

For a fixed $\alpha$ we define the function $\psi_{\alpha}(\lambda)$ as the left side of equation \eqref{eq:lambdaalpha}, i.e., $$\psi_{\alpha}(\lambda)=\sum_{i=1}^n(\lambda-t_i)_+w'_i(\alpha).$$
If we can guarantee the existence and uniqueness of the solution of $\psi_{\alpha}(\lambda)=1$, then there exists a unique $\lambda$ such that the last condition in \eqref{eq:kktsystem} is satisfied. Once $\lambda$ is obtained, we can obtain $\mu_{i}^{1}$ and $\mu_{i}^{2}$ to finally obtain the arc flows $v_i$ for all $i \in A$.

The $\psi_{\alpha}$ graph is shown in Figure \ref{fig:psi_one_alpha}. It is not difficult to see that the function $\psi_{\alpha}$ is continuous, piece-wise linear, strictly increasing, and convex for each $\alpha$ (since $w_i'(\alpha)>0$). Also, $\psi_{\alpha}(\lambda)=0$ if $\lambda$ is less than the minimum of $t_i$' s. From that value, the function strictly increases with a slope that positively jumps every time it passes through a new $t_i$. Therefore, the function will exceed the value of 1, and then, the equation \eqref{eq:lambdaalpha} must have a unique solution. Without loss of generality, we will assume that the values of $t_i$ are increasing, i.e., $t_i < t_{i+1}$ for all $i=1, \dots, n-1$, so $t_1=\min_{i} t_i$. Figure \ref{fig:psi_different_alpha} illustrates the graph of $\psi_{\alpha}$ for different values of $\alpha$. As we can observe, $\psi_{\alpha}$ is a decreasing function of $\alpha$, and the graphs for the specific values $\alpha_2$ and $\alpha_3$ are shown. This and other properties of $\psi_{\alpha}$ are demonstrated in Appendix \ref{sec:app}.

\begin{figure}
    \centering
    \subfigure[$\psi_{\alpha}$ is continuous, piece-wise linear, strictly increasing, and convex for a specific $\alpha$.]
    {
    \begin{tikzpicture}[scale=0.68, transform shape]
    
    \draw[->,black] (0,1)-- node[below=5mm,right=40mm]{$\lambda$}(10,1);
    
    \draw[->] (1,0)--node[above=10mm,left=1mm]{$1$}(1,7);
    \draw [dotted, line width=1pt] (1,4.5)--(10,4.5);
    \draw [dotted]
     (3,1)node[below=4mm,left=-3mm]{$t_1$}--(3,7)
     (5,1)node[below=4mm,left=-3mm]{$t_2$}--(5,2.5)
     (7.5,1)node[below=4mm,left=-3mm]{$t_3$}--(7.5,6)
     (6.42,1)--(6.42,4.5);
    
    \draw
    (0.9,4.5)--(1.1,4.5) 
(3,0.9)--(3,1.1)
    (5,0.9)--(5,1.1)
    (7.5,0.9)--(7.5,1.1)
    (6.42,0.9)--(6.42,1.1);
    \draw[line width=1.5pt]
    (3,1)--(5,2.5)--(7.5,6)--(8,7);
    \draw[fill=black]
    (5,2.5)circle(2pt)
    (7.5,6)circle(2pt)
    (3,1)circle(2pt);
    
    \draw[fill=white]
    (6.42,4.5)circle(3pt);
 
    \draw  (7.3,6.75) node {$\psi_{\alpha}$}
        (6.5,0.5) node {$\lambda(\alpha)$}
        (5,-0.21) node { };

    \end{tikzpicture}
    
    \label{fig:psi_one_alpha}
    }
    \subfigure[$\psi_{\alpha}$ is a decreasing function of $\alpha$.]
    {
    \begin{tikzpicture}[scale=0.68, transform shape]
    
    \draw[->,black] (0,1)-- node[below=5mm,right=40mm]{$\lambda$}(10,1);
    
    \draw[->] (1,0)--node[above=10mm,left=1mm]{$1$}(1,7);
    \draw [dotted, line width=1pt] (1,4.5)--(10,4.5);
    \draw [dotted]
     (3,1)node[below=4mm,left=-3mm]{$t_1$}--(3,7)
     (5,1)node[below=4mm,left=-3mm]{$t_2$}--(5,7) 
     (7.5,1)node[below=4mm,left=-3mm]{$t_3$}--(7.5,7)
     (6.42,1)--(6.42,4.5);
    
    \draw
    (0.9,4.5)--(1.1,4.5) 
    (3,0.9)--(3,1.1)
    (5,0.9)--(5,1.1)
    (7.5,0.9)--(7.5,1.1)
    (6.42,0.9)--(6.42,1.1);
    \draw[line width=1.5pt]
    (3,1)--(5,4.5)--(5.5,7)
    (3,1)--(5,2.5)--(7.5,6)--(8,7)
    (3,1)--(5,1.75)--(7.5,4.5)--(8.9,7);
    \draw[fill=black]
    (5,2.5)circle(2pt)
    (7.5,6)circle(2pt)
    (3,1)circle(2pt)
    (5,1.75)circle(2pt);
    
 \draw[fill=white]
    (5,4.5)circle(3pt)
    (6.42,4.5)circle(3pt)
    (7.5,4.5)circle(3pt);
 
 \draw  (5.9,6.75) node {$\psi_{\alpha_2}$}
        (8.2,6.75) node {$\psi_{\alpha}$}
        (9.2,6.75) node {$\psi_{\alpha_3}$}
        (5,0) node {$\lambda(\alpha_2)$}
        (6.5,0.5) node {$\lambda(\alpha)$}
        (7.5,0) node {$\lambda(\alpha_3)$};
    \end{tikzpicture}
    \label{fig:psi_different_alpha}
    }
    \caption{Graph of function $\psi_{\alpha}$ for a specific $\alpha$ and for different increasing values of $\alpha$.}
    \label{fig:psi_complete}
\end{figure}
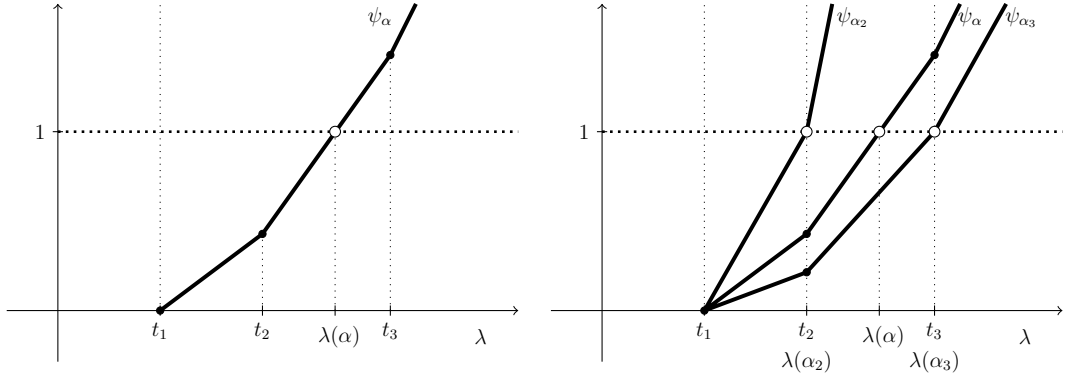

This allows defining an application that maps each $\alpha$ with the solution of equation $\psi_{\alpha}(\lambda)=1$. We denote this function by $\bar{\lambda}$. To proceed with the social optimum characterisation, we establish some preliminary results for $\bar{\lambda}$.

\begin{proposition}
Assume that the functions $w_i\in C^2(0,\infty)$ are strictly concave for all $i=1,\dots,n$. Thus, the function $\bar{\lambda}$ is an increasing function.
\end{proposition}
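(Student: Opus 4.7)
The plan is to use a direct pointwise comparison of the functions $\psi_{\alpha_1}$ and $\psi_{\alpha_2}$ for $\alpha_1 < \alpha_2$, exploiting the fact that strict concavity of $w_i$ makes $w_i'$ strictly decreasing. Since $w_i$ is the inverse of the increasing function $v_i \mapsto v_i/f_i(v_i)$ (defined on $[0,\bar v_i)$), $w_i$ is itself strictly increasing, so $w_i'(\alpha) > 0$ for every $\alpha>0$. Combining this with strict concavity gives, for $0 < \alpha_1 < \alpha_2$,
\[
0 < w_i'(\alpha_2) < w_i'(\alpha_1), \qquad i=1,\dots,n.
\]

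Next I would compare $\psi_{\alpha_1}$ and $\psi_{\alpha_2}$ pointwise. Since each coefficient $(\lambda-t_i)_+$ is non-negative and at least one of them is strictly positive as soon as $\lambda > t_1 = \min_i t_i$, multiplying the inequality $w_i'(\alpha_2) < w_i'(\alpha_1)$ term by term and summing yields
\[
\psi_{\alpha_2}(\lambda) \;=\; \sum_{i=1}^n (\lambda-t_i)_+ \, w_i'(\alpha_2) \;<\; \sum_{i=1}^n (\lambda-t_i)_+ \, w_i'(\alpha_1) \;=\; \psi_{\alpha_1}(\lambda)
\]
for every $\lambda > t_1$.

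Now I would evaluate at $\lambda = \bar\lambda(\alpha_1)$. Because $\psi_{\alpha_1}(\lambda) = 0$ for $\lambda \le t_1$ and $\psi_{\alpha_1}(\bar\lambda(\alpha_1)) = 1$, necessarily $\bar\lambda(\alpha_1) > t_1$, so the strict inequality above applies and gives $\psi_{\alpha_2}(\bar\lambda(\alpha_1)) < 1$. Since $\psi_{\alpha_2}$ is strictly increasing (a property already established right before the proposition, see Figure \ref{fig:psi_one_alpha}) and $\psi_{\alpha_2}(\bar\lambda(\alpha_2)) = 1$, we conclude
\[
\bar\lambda(\alpha_2) \;>\; \bar\lambda(\alpha_1),
\]
which is the claim.

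No step should present a real obstacle; the main thing to be careful about is ensuring strict inequalities (rather than merely non-strict ones) throughout, which hinges on two points: the strict concavity of $w_i$ (given by hypothesis) and the fact that $\bar\lambda(\alpha) > t_1$, which follows because $\psi_\alpha$ vanishes identically on $(-\infty, t_1]$ while the defining equation requires it to equal $1$. Positivity of $w_i'$ must also be explicitly invoked so that $w_i'(\alpha_2) < w_i'(\alpha_1)$ can be multiplied by the non-negative coefficients $(\lambda-t_i)_+$ without reversing the sign.
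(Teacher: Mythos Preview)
Your proof is correct and follows essentially the same route as the paper: both arguments rest on the fact that strict concavity of each $w_i$ makes $\psi_\alpha(\lambda)$ strictly decreasing in $\alpha$ for $\lambda>t_1$ (the paper isolates this as Proposition~\ref{prop:psidecr}, whereas you prove it inline), and then invoke the strict monotonicity of $\psi_\alpha$ in $\lambda$ to conclude. The only cosmetic difference is the evaluation point---the paper plugs in $\bar\lambda(\alpha_2)$ and compares against $\psi_{\alpha_1}$, while you plug in $\bar\lambda(\alpha_1)$ and compare against $\psi_{\alpha_2}$---but these are symmetric variants of the same idea.
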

\begin{proof}
    Let $\hat{\alpha}, \tilde{\alpha} \in (0, \infty)$ be such that $\hat{\alpha}< \tilde{\alpha}$, and let $\hat{\lambda}=\bar{\lambda}(\hat{\alpha})$ and $\tilde{\lambda}=\bar{\lambda}(\tilde{\alpha})$. Then $\psi_{\hat{\alpha}}(\hat{\lambda})= \psi_{\tilde{\alpha}}(\tilde{\lambda})=1$. By Proposition \ref{prop:psidecr},
    $$ \psi_{\hat{\alpha}}(\tilde{\lambda}) > \psi_{\tilde{\alpha}}(\tilde{\lambda}) = \psi_{\hat{\alpha}}(\hat{\lambda}).$$
    Since $\psi_{\alpha}$ is an increasing function for a fixed value of $\alpha$, from above it follows that $\hat{\lambda}<\tilde{\lambda}$, or equivalently, $\bar{\lambda}(\hat{\alpha})<\bar{\lambda}(\tilde{\alpha})$. Then we get the result. 
\end{proof}

We can obtain explicit formulas for the $\bar{\lambda}$ function. In the case where there is no $\alpha_k$ such that $\psi_{\alpha_k}(t_k)=1$ for all $k=2,\dots,n$, we get $\bar{\lambda}(\alpha)>t_n$ for all $\alpha \in (0, \infty)$ by Proposition \ref{prop:chain_alpha}. Then, the equation that defines $\bar{\lambda}(\alpha)$ is $$\sum_{i=1}^{n}(\bar{\lambda}(\alpha)-t_i)w'_i(\alpha) = 1,$$ 
which is easy to solve for 
\begin{equation} \label{eq:lambdanew}
    \bar{\lambda}(\alpha)= \frac{1+\sum_{i=1}^{n}t_iw'_i(\alpha) }{\sum_{i=1}^{n}w'_i(\alpha)}.
\end{equation}

On the other hand, assume there exists $\alpha_j \in (0, \infty)$ such that $\psi_{\alpha_j}(t_j)=1$, or equivalently, $\bar{\lambda}(\alpha_j)=t_j$. Without loss of generality, we assume that $j$ is the first index with this property. Then, by Proposition \ref{prop:chain_alpha}, for each $t_k$ with $k=j,\dots,n$, there exist $\alpha_k \in (0,\infty)$ such that $\bar{\lambda}(\alpha_k)=t_k$. 

Now, the function $\bar{\lambda}$ can be defined in the intervals $(0,\alpha_j),\ (\alpha_{j},\alpha_{j+1}), \dots,(\alpha_n,\infty),$ by inverting the last equation in \eqref{eq:kktsystem}. More precisely, for $j<k\le n-1$ and $\alpha \in (\alpha_{k},\alpha_{k+1})$ we have that $\bar{\lambda}(\alpha)\in (t_{k},t_{k+1})$ so the equation that defines $\bar{\lambda}(\alpha)$ is $$\sum_{i=1}^{k}(\bar{\lambda}(\alpha)-t_i)w'_i(\alpha) = 1,$$ from which it is easy to solve for 
$$
    \bar{\lambda}(\alpha)= \frac{1+\sum_{i=1}^{k}t_iw'_i(\alpha) }{\sum_{i=1}^{k}w'_i(\alpha)}.
$$
In the same way, for $\alpha > \alpha_n$, we have that $\bar{\lambda}(\alpha) > t_n$, so similarly as before, we can obtain 
$$
    \bar{\lambda}(\alpha)= \frac{1+\sum_{i=1}^{n}t_iw'_i(\alpha) }{\sum_{i=1}^{n}w'_i(\alpha)}.
$$
Finally, since $j$ is the first index such that equation \eqref{eq:lambdaalpha} has solution for $t_j$, there does not exist $\alpha_{j-1} \in (0, \infty)$ such that $\psi_{\alpha_{j-1}}(t_{j-1})=1$. Then, from Proposition \ref{prop:chain_alpha}, we get that there is no $\alpha \in (0, \infty)$ such that $\psi_{\alpha}(\lambda)=1$ for $\lambda<t_{j-1}$. Therefore, for $0< \alpha < \alpha_j$, we have that $t_{j-1}< \bar{\lambda}(\alpha) < t_j$, and 
$$
   \bar{\lambda}(\alpha)= \frac{1+\sum_{i=1}^{j}t_iw'_i(\alpha) }{\sum_{i=1}^{j}w'_i(\alpha)}.
$$

Then, under the hypothesis of Corollary \ref{cor:alpha_k} and some abuse of notation,
\begin{equation} \label{eq:lambdafunct}
    \bar{\lambda}(\alpha)= \left\lbrace \begin{array}{cl}
        \frac{1+\sum_{i=1}^{k}t_iw'_i(\alpha) }{\sum_{i=1}^{k}w'_i(\alpha)} & \mbox{if } \alpha \in (\alpha_k, \alpha_{k+1}) \mbox{ for some } k=j-1,\dots,n, \\
        \\
        t_k & \mbox{if } \alpha=\alpha_k \mbox{ for some } k=j,\dots,n,
    \end{array} \right.
\end{equation}
where $\alpha_{j-1}=0$ and $\alpha_{n+1}=+\infty$. 

\begin{theorem}
    If the functions $w_i$ are strictly concave and $w_i'$ is continuous for all $i=1, \dots, n$, then the function $\bar{\lambda}$ is continuous.
\end{theorem}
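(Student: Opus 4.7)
The plan is to exploit the piecewise formula \eqref{eq:lambdafunct}, verifying continuity inside each open interval $(\alpha_k,\alpha_{k+1})$ trivially and then matching one-sided limits at each breakpoint $\alpha_k$. On each open piece, $\bar{\lambda}(\alpha)$ is a quotient of finite sums of continuous functions of $\alpha$ (by the assumption that each $w_i'$ is continuous). The denominator $\sum_{i=1}^k w_i'(\alpha)$ never vanishes: since $w_i$ is strictly increasing on $(0,\infty)$, one has $w_i'(\alpha)>0$, so the sum is strictly positive. Hence $\bar{\lambda}$ is continuous on every open interval of the partition, and the only question is continuity at the junctions $\alpha_k$ for $k=j,\dots,n$.

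Fix such an $\alpha_k$. The key algebraic observation is that $\alpha_k$ is characterised by $\psi_{\alpha_k}(t_k)=1$, i.e.
\begin{equation*}
    \sum_{i=1}^{n}(t_k-t_i)_{+}\,w_i'(\alpha_k)\;=\;\sum_{i=1}^{k-1}(t_k-t_i)\,w_i'(\alpha_k)\;=\;1,
\end{equation*}
where the $i=k$ term drops because $(t_k-t_k)_{+}=0$. I would now rewrite this identity in the two forms that the one-sided limits require. Adding $\sum_{i=1}^{k-1} t_i w_i'(\alpha_k)$ to both sides gives
\begin{equation*}
    t_k\sum_{i=1}^{k-1} w_i'(\alpha_k)\;=\;1+\sum_{i=1}^{k-1} t_i w_i'(\alpha_k),
\end{equation*}
which is exactly the statement that the left-hand limit
\begin{equation*}
    \lim_{\alpha\to\alpha_k^{-}}\frac{1+\sum_{i=1}^{k-1}t_iw_i'(\alpha)}{\sum_{i=1}^{k-1}w_i'(\alpha)}\;=\;t_k,
\end{equation*}
using continuity of the $w_i'$. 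For the right-hand limit, add $\sum_{i=1}^{k}t_iw_i'(\alpha_k)$ to both sides of the characterising identity (the $i=k$ term contributes $t_kw_k'(\alpha_k)$ on both sides and cancels in the difference) to obtain
\begin{equation*}
    t_k\sum_{i=1}^{k}w_i'(\alpha_k)\;=\;1+\sum_{i=1}^{k}t_iw_i'(\alpha_k),
\end{equation*}
which yields $\lim_{\alpha\to\alpha_k^{+}}\bar{\lambda}(\alpha)=t_k=\bar{\lambda}(\alpha_k)$.

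The remaining boundary cases are handled analogously: at the left endpoint $\alpha_{j-1}=0$, if $j>1$, only the right-hand limit is relevant and the same identity $\psi_{\alpha_j}(t_j)=1$ matches it against $\bar{\lambda}(\alpha_j)=t_j$; at $\alpha_{n+1}=+\infty$ there is nothing to check. Combining the interior continuity with the matching of one-sided limits at every $\alpha_k$ yields continuity of $\bar{\lambda}$ on $(0,\infty)$. The main obstacle, in my view, is purely bookkeeping: keeping straight that the defining equation uses indices $1,\dots,k-1$ (because $(t_k-t_i)_{+}$ kills the $i\ge k$ terms) while the formula on $(\alpha_k,\alpha_{k+1})$ uses indices $1,\dots,k$; the $i=k$ contribution cancels in both the numerator and denominator when evaluated at $\alpha_k$ against the target value $t_k$, which is what makes the two one-sided limits agree.
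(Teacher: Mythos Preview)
Your proof is correct and follows essentially the same route as the paper's: verify continuity on each open piece $(\alpha_k,\alpha_{k+1})$ via the explicit quotient formula (using $w_i'>0$ to keep the denominator nonzero), then match the one-sided limits at each breakpoint $\alpha_k$ by rewriting the defining identity $\psi_{\alpha_k}(t_k)=\sum_{i=1}^{k-1}(t_k-t_i)w_i'(\alpha_k)=1$ in the two equivalent forms $t_k\sum_{i=1}^{k-1}w_i'(\alpha_k)=1+\sum_{i=1}^{k-1}t_iw_i'(\alpha_k)$ and $t_k\sum_{i=1}^{k}w_i'(\alpha_k)=1+\sum_{i=1}^{k}t_iw_i'(\alpha_k)$. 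Your remark about the $i=k$ term cancelling is exactly the mechanism the paper uses for the right-hand limit; the only extraneous item is your comment about $\alpha_{j-1}=0$, which is not a junction and needs no verification.
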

\begin{proof}
    For the case where there is no $\alpha_k$ such that $\psi_{\alpha_k}(t_k)=1$ for all $k=2,\dots,n$, $\bar{\lambda}$ is defined by \eqref{eq:lambdanew}, which will be continuous as $w_i'$ is continuous for each $i=1, \dots,n$ (since we know that $w_i’(\alpha)>0$ for $\alpha>0$). Assume now there exists $\alpha_j \in (0, \infty)$ such that $\psi_{\alpha_j}(t_j)=1$. Let $k=j-1 \dots, n$ and consider $\alpha \in (\alpha_k, \alpha_{k+1})$. As we saw previously, the function $\bar{\lambda}$ will be \eqref{eq:lambdafunct},
    \begin{equation*}
        \bar{\lambda}(\alpha) = \frac{1+ \sum_{i=1}^{k}t_i w_i'(\alpha)}{\sum_{i=1}^{k}w_i'(\alpha)}
    \end{equation*}
    which will be continuous for the same reason mentioned above. Therefore, all that remains is to prove the continuity in $\alpha_k$ for $k=j,\dots,n$. We know that $\bar{\lambda}(\alpha_k)=t_k$; in that case, 
    $$\psi_{\alpha_k}(t_k)=\sum_{i=1}^{k-1} (t_k-t_i)w_i'(\alpha_k)=1. $$
    Then
    \begin{equation} \label{eq:tk}
        t_k=\frac{1+\sum_{i=1}^{k-1} t_iw_i'(\bar{\alpha_k})}{\sum_{i=1}^{k-1} w_i'(\bar{\alpha_k})}.
    \end{equation} 
    Now, analysing the left limit, we have:
    \begin{equation*}
        \begin{split}
            \lim\limits_{\alpha \to \alpha_k^{-}} \bar{\lambda}(\alpha) &= \lim\limits_{\alpha \to \alpha_k^{-}} \frac{1+ \sum_{i=1}^{k-1}t_iw_i'(\alpha)}{\sum_{i=1}^{k-1}w_i'(\alpha)} \\
            &= \frac{1+ \sum_{i=1}^{k-1}t_iw_i'(\alpha_k)}{\sum_{i=1}^{k-1}w_i'(\alpha_k)} \\
            &= t_k,
        \end{split}
    \end{equation*}
    where the last equality follows from \eqref{eq:tk}. Finally, the right limit is: 
    \begin{equation*}
        \begin{split}
            \lim\limits_{\alpha \to \alpha_k^{+}} \bar{\lambda}(\alpha) &= \lim\limits_{\alpha \to \alpha_k^{+}} \frac{1+ \sum_{i=1}^{k}t_iw_i'(\alpha)}{\sum_{i=1}^{k}w_i'(\alpha)} \\
            &= \frac{1+ \sum_{i=1}^{k}t_iw_i'(\alpha_k)}{\sum_{i=1}^{k}w_i'(\alpha_k)} \\
            &= t_k,
        \end{split}
    \end{equation*}
     where the last equality holds because 
    \begin{equation*}
        \begin{split}
            1=\psi_{\alpha_k}(t_k)=\sum_{i=1}^{k-1}(t_k-t_i)w_i'(\alpha_k) &= \sum_{i=1}^{k-1}(t_k-t_i)w_i'(\alpha_k) + (t_k-t_k)w_i'(\alpha_k) \\
            &= \sum_{i=1}^{k}(t_k-t_i)w_i'(\alpha_k) \\
            &= t_k \sum_{i=1}^{k}w_i'(\alpha_k) - \sum_{i=1}^{k}t_iw_i'(\alpha_k)
        \end{split}
    \end{equation*}
    so it is evident that $t_k = \frac{1+ \sum_{i=1}^{k}t_iw_i'(\alpha_k)}{\sum_{i=1}^{k}w_i'(\alpha_k)}$. Therefore, $\bar{\lambda}$ is continuous in $\alpha_k$ for $k=j, \dots , n$, and then continuous in its domain.

\end{proof}

Now, let $\alpha>0$, there exist $\bar{\lambda}(\alpha)$ such that \eqref{eq:lambdaalpha} holds. Then, the relations given in the system \eqref{eq:kktsystem} implies that $v_i=0$ if $\bar{\lambda}(\alpha)<t_i$ and $v_i=w_i(\alpha)$ if $\bar{\lambda}(\alpha)>t_i$, while for the remaining lines we have $0\le v_i \le w_i(\alpha)$. Hence
$$\sum_i\{w_i(\alpha):t_i<\bar{\lambda}(\alpha) \} \le \sum_{i=1}^n v_i \le \sum_i\{w_i(\alpha):t_i \le \bar{\lambda}(\alpha) \}$$
If we denote by
\begin{equation}\label{eq:x_hat}
    \hat{x}(\alpha) = \sum_i\{w_i(\alpha):t_i<\bar{\lambda}(\alpha) \},
\end{equation}
\begin{equation}\label{eq:x_check}
    \check{x}(\alpha) = \sum_i\{w_i(\alpha):t_i\leq\bar{\lambda}(\alpha) \}, 
\end{equation}
then $x \in [\hat{x}(\alpha),\check{x}(\alpha)]$. The functions $\hat{x}(\alpha)$ and $\check{x}(\alpha)$ are increasing, continuous, and equal, except at the values $\alpha_k$ such that $\bar{\lambda}(\alpha_k)=t_k$ where we have $\check{x}(\alpha_k)>\hat{x}(\alpha_k)$ (Figure \ref{fig:x}). Moreover, $\hat{x}(0)=\check{x}(0)=0$ and $\hat{x}(\alpha)=\check{x}(\alpha) \rightarrow \sum_{i=1}^n \bar{v}_i$ when $\alpha \rightarrow \infty$. Therefore, for each $x \in (0, \sum_{i=1}^n \bar{v}_i)$, there exists a unique $\alpha_x$ such that $x\in[\hat{x}(\alpha_x),\check{x}(\alpha_x)]$.

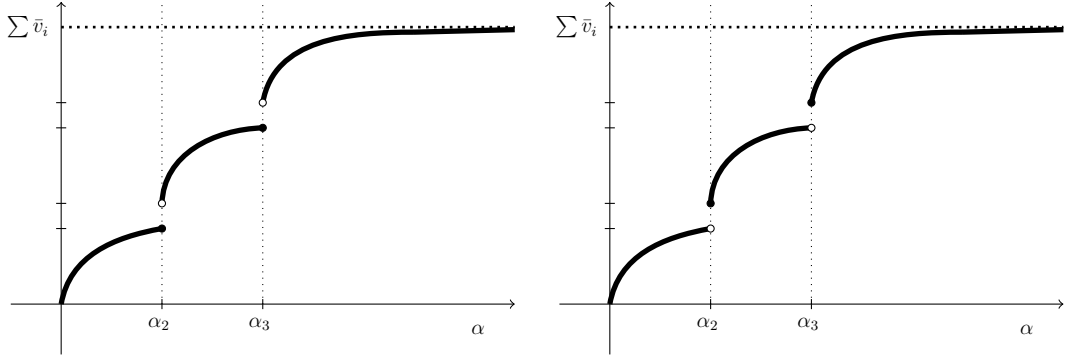
\begin{figure}[t]
    \centering
    \subfigure[Graph of the function $\hat{x}$ defined in \eqref{eq:x_hat}.]
    {
    \begin{tikzpicture}[scale=.67, transform shape]
    
    \draw[->,black] (0,1)-- node[below=5mm,right=40mm]{$\alpha$}(10,1);
    
    \draw[->] (1,0)--node[above=30mm,left=1mm]{$\sum\bar{v}_i$}(1,7);
    \draw [dotted, line width=1pt] (1,6.5)--(10,6.5);

    \draw [black, line width=2pt]
    (1,1) to[out=80,in=-170] (3,2.5) 
    (3,3) to[out=88,in=-178] (5,4.5)
    (5,5) to[out=80,in=-180] (8,6.4)
    (8,6.4) to (10,6.45)
    ;
    
    \draw [dotted]
     (3,1)node[below=4mm,left=-3mm]{$\alpha_2$}--(3,7)
     (5,1)node[below=4mm,left=-3mm]{$\alpha_3$}--(5,7)
     ;
    
    \draw
    (3,0.9)--(3,1.1)
    (5,0.9)--(5,1.1)
    (.9,2.5)    -- (1.1,2.5)
    (.9,3)    -- (1.1,3)
    (.9,4.5)    -- (1.1,4.5)
    (.9,5)    -- (1.1,5)
    ;

    \draw[black, fill=black]
    (3,2.5)circle(2pt)
    (5,4.5)circle(2pt);

    \draw[black, fill=white]
    (3,3)circle(2pt)
    (5,5)circle(2pt);
    
    \end{tikzpicture}
    \label{fig:hatx}
 
    }
    \subfigure[Graph of the function $\check{x}$ defined in \eqref{eq:x_check}.]
    {
    \begin{tikzpicture}[scale=.67, transform shape]
    
    \draw[->,black] (0,1)-- node[below=5mm,right=40mm]{$\alpha$}(10,1);
    
    \draw[->] (1,0)--node[above=30mm,left=1mm]{$\sum\bar{v}_i$}(1,7);
    \draw [dotted, line width=1pt] (1,6.5)--(10,6.5);

    \draw [black, line width=2pt]
    (1,1) to[out=80,in=-170] (3,2.5) 
    (3,3) to[out=88,in=-178] (5,4.5)
    (5,5) to[out=80,in=-180] (8,6.4)
    (8,6.4) to (10,6.45)
    ;
    
    \draw [dotted]
     (3,1)node[below=4mm,left=-3mm]{$\alpha_2$}--(3,7)
     (5,1)node[below=4mm,left=-3mm]{$\alpha_3$}--(5,7)
     ;
    
    \draw
    (3,0.9)--(3,1.1)
    (5,0.9)--(5,1.1)
    (.9,2.5)    -- (1.1,2.5)
    (.9,3)    -- (1.1,3)
    (.9,4.5)    -- (1.1,4.5)
    (.9,5)    -- (1.1,5)
    ;

    \draw[black, fill=white]
    (3,2.5)circle(2pt)
    (5,4.5)circle(2pt);

    \draw[black, fill=black]
    (3,3)circle(2pt)
    (5,5)circle(2pt);
    
    \end{tikzpicture}
    \label{fig:xcheck}

    }
    \caption{The functions $\hat{x}(\alpha)$ and $\check{x}(\alpha)$. Note that they are equal, except at the values $\alpha_k$ such that $\bar{\lambda}(\alpha_k)=t_k$ where we have $\check{x}(\alpha_k)>\hat{x}(\alpha_k)$.}
    \label{fig:x}
\end{figure}

We are ready to provide a direct characterisation of the optimum line-ﬂows $v$. The next Theorem will be useful for stating a simple model for general transit networks by working directly in terms of arc-ﬂows and avoiding explicitly dealing with the notion of strategy.

\begin{theorem} \label{teo:characterization}
     Let $x\in(0,\sum_{i=1}^n \bar{v}_i)$, and assume that the functions $w_i \in C^2(0,\infty)$ are strictly concave for all $i=1, \dots, n$. Set $\bar{\lambda}$ as the function defined by \eqref{eq:lambdafunct}, with $\alpha_x$ the unique solution of $x\in[\hat{x}(\alpha),\check{x}(\alpha)]$. The social optimum is given by 
     \begin{equation}\label{eq:characterization_so}
        v_i =
        \begin{cases} 
             0& \text{ if } t_i > \bar{\lambda}(\alpha_x),  \\
             w_i(\alpha_x)& \text{ if } t_i < \bar{\lambda}(\alpha_x),  \\
             0\leq v_i\leq w_i(\alpha_x)& \text{ if } t_i = \bar{\lambda}(\alpha_x).
        \end{cases}        
    \end{equation} 
\end{theorem}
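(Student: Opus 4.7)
The plan is to read the characterisation directly off the KKT analysis that has already been assembled for the convex program \eqref{eqn:opt}. Under the hypothesis that each $w_i \in C^2(0,\infty)$ is strictly concave, \eqref{eqn:opt} has linear objective and differentiable, convex constraints, and admits Slater points since $x < \sum_i \bar{v}_i$. Hence the system \eqref{eq:kktsystem} is both necessary and sufficient for optimality, and it suffices to exhibit a feasible triple $(v,\alpha,\lambda)$ with the required multipliers $\mu_i^1,\mu_i^2$ matching the prescription \eqref{eq:characterization_so}, and, conversely, to verify that any KKT point is of this form.

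For existence, I would first pick $\alpha_x$ as the unique value with $x \in [\hat{x}(\alpha_x),\check{x}(\alpha_x)]$, whose existence was established from the monotonicity and continuity of $\hat{x},\check{x}$ together with $\hat{x}(0)=\check{x}(0)=0$ and $\hat{x}(\alpha)=\check{x}(\alpha)\to \sum_i \bar{v}_i$ as $\alpha\to\infty$. Set $\lambda=\bar{\lambda}(\alpha_x)$, which is well defined by the strict monotonicity and convexity of $\psi_{\alpha_x}$. Define $v$ by the three cases in \eqref{eq:characterization_so}, choosing the free components (those $i$ with $t_i=\lambda$) so that $\sum_i v_i=x$; this is possible precisely because $x\in[\hat{x}(\alpha_x),\check{x}(\alpha_x)]$, where the lower (resp.\ upper) endpoint corresponds to setting each free $v_i$ to $0$ (resp.\ to $w_i(\alpha_x)$), and intermediate values are reached by continuity. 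Finally, put $\mu_i^1=(t_i-\lambda)_+$ and $\mu_i^2=(\lambda-t_i)_+$. Then \eqref{eq:kktb} holds by definition, \eqref{eq:kktc} is exactly $\psi_{\alpha_x}(\lambda)=1$, and \eqref{eq:kkta} holds termwise: $t_i>\lambda$ forces $v_i=0$ and $\mu_i^2=0$; $t_i<\lambda$ forces $v_i=w_i(\alpha_x)$ and $\mu_i^1=0$; $t_i=\lambda$ makes both multipliers vanish, so $0\le v_i\le w_i(\alpha_x)$ is enough.

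For the converse, any optimal $v$ is paired with some $\alpha>0$ and KKT multipliers satisfying \eqref{eq:kktsystem}. The analysis in the lines preceding \eqref{eq:kktsystem} forces $\mu_i^1=(t_i-\lambda)_+$ and $\mu_i^2=(\lambda-t_i)_+$ with $\psi_\alpha(\lambda)=1$, so $\lambda=\bar{\lambda}(\alpha)$, and complementary slackness yields the piecewise form \eqref{eq:characterization_so}. The constraint $\sum_i v_i=x$, combined with the monotonicity of $\hat{x},\check{x}$, identifies $\alpha=\alpha_x$. The main subtlety is the boundary case where $\alpha_x$ coincides with some $\alpha_k$, i.e.\ a jump of $\check{x}-\hat{x}$: here several lines have $t_i=\bar{\lambda}(\alpha_x)$, and one must verify that the free $v_i$'s can always be allocated inside $[0,w_i(\alpha_x)]$ to realise $\sum v_i=x$. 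The containment $x\in[\hat{x}(\alpha_x),\check{x}(\alpha_x)]$ is exactly what encodes this, so the verification reduces to orderly bookkeeping of the KKT relations rather than to any genuinely new estimate.
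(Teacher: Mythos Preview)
Your proposal is correct and follows the same approach the paper develops in the paragraphs preceding the theorem: the KKT system \eqref{eq:kktsystem} for the convex program \eqref{eqn:opt}, the identification $\mu_i^1=(t_i-\lambda)_+$, $\mu_i^2=(\lambda-t_i)_+$, and the definition of $\alpha_x$ via $\hat{x},\check{x}$. You are somewhat more explicit than the paper about Slater/sufficiency and about separating the existence and converse directions, but the argument is the same.
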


The last Theorem implies that for each $x \in (0, \sum_{i=1}^n \bar{v}_i)$, there exists at least one social optimum. This optimum will be unique unless there are two or more lines with $t_i=\bar{\lambda}(\alpha)$.

Note that in the case where there is no $\alpha_k \in (0, \infty)$ such that $\bar{\lambda}(\alpha_k)=t_k$ for all $k=2,\dots,n$, the functions $\hat{x}(\alpha)$ and $\check{x}(\alpha)$ are equal in all their domains. Furthermore, by Proposition \ref{prop:chain_alpha}, $\bar{\lambda}(\alpha_x)> t_i$ for all $i$, and then $v_i=w_i(\alpha_x)$. On the other hand, if there exists $\alpha_k \in (0, \infty)$ such that $\bar{\lambda}(\alpha_k)=t_k$ for some $k$ and $x$ does not belong to any of the intervals $[\hat{x}(\alpha_k),\check{x}(\alpha_k)]$, we have $v_i=w_i(\alpha_x)$ for all the lines with $t_i<\bar{\lambda}(\alpha_x)$ and $v_i=0$ for the lines with $t_i>\bar{\lambda}(\alpha_x)$. When $x \in [\hat{x}(\alpha_k),\check{x}(\alpha_k)]$ we have $\alpha_x=\alpha_k$ and $\bar{\lambda}(\alpha_x)=t_k$. Note that in this scenario, a ﬂow increment (or reduction) in the range $[\hat{x}(\alpha_k),\check{x}(\alpha_k)]$ only modifies the optimum flow on line $k$. In this situation, every optimum flow $v_k$ may have $0 < v_k /f_k (v_k) < \alpha_x$ for some line $k$.

Theorem \ref{teo:characterization} is weak in that, in most cases, it is not possible to recover the strategy flows from the line flows. However, from the point of view of public transport decision-makers, it is sufficient to know the number of passengers that must use each line to guarantee optimal operation of the system.

\section{Price of anarchy}\label{sec:poa}

In the previous sections, we presented the common-lines problem for the case of an origin and a destination and the characterisation of the equilibrium solution and the social optimum. This allows us to analyse the public transport network's performance when users behave selfishly (seeking to minimise their travel time) compared to cooperative behaviour (seeking the system's optimal performance). For this, we recall what we mean by \textit{social cost} in the common-lines problem. Given a strategy flow assignment, the social cost of a transport network is given by: 
\begin{equation}\label{eq:social_cost}
    SC = \sum_{s \in \mathcal{S}}h_sT_s(v(h)).
\end{equation}
Equivalently, as a consequence of the results obtained in Section \ref{sec:SO}, given an arc flow assignment the social cost of the transport network can be obtained as
\begin{equation}\label{eq:social_cost_arcs}
    SC = \sum_{i=1}^n t_iv_i + \max_i \left( \frac{v_i}{f_i(v_i)}\right).
\end{equation}

If the strategy-flow vector $h$ (or the line-flow vector $v$) is a social optimum, the social cost is the minimum possible, and we denote this optimal social cost as $OSC$. If, instead, the strategy-flow vector $h^*$ is an equilibrium, each strategy $s$ with $h_s^*>0$ satisfy that $T_s(v(h^*))=\widehat{T}(v(h^*))$, where $\widehat{T}(v(h^*))=\min_{s\in\mathcal{S}}T_s(v(h^*))$. Therefore, the social cost associated with an equilibrium can be obtained as follows:

\begin{equation}\label{eq:social_cost_equilibrium}
    WSC  = \sum_{s \in \mathcal{S}}h_s^*T_s(v(h^*)) 
    = \sum_{s \in \mathcal{S}}h_s^* \widehat{T}(v(h^*)) 
    = \widehat{T}(v(h^*)) \sum_{s \in \mathcal{S}}h_s^* 
    = \widehat{T}(v(h^*)) x.
\end{equation}

When users are permitted to behave selfishly, the social cost is greater than or equal to the optimal system cost ($WSC \geq OSC$). In \cite{Rough2002}, the concept of \textit{price of anarchy} is used to quantify the system’s inefficiency when users follow individual and non-cooperative objectives. This concept has been used in different routing games, for example, public services (\cite{KNIGHT2013122}) or traffic networks (\cite{Youn2008}, \cite{Zang2018}), but to the best of our knowledge, its application in the common-lines problem has not been studied. As introduced in \cite{KNIGHT2013122}, the price of anarchy can be used as an indicator of the inefficiency of a given public transport system, comparing the network performance when users behave selfishly with the optimal state of the system. Specifically, the price of anarchy is given by the ratio between the social cost associated with the Wardrop equilibrium ($WSC$) and the optimal system cost ($OSC$):

\begin{equation}\label{eq:price_of_anarchy}
    PoA = \frac{WSC}{OSC}.
\end{equation}
Based on this formulation, if the price of anarchy is greater than $1$, the system becomes inefficient in the equilibrium situation (when users behave selfishly). With our characterisation of the social optimum, we now have the necessary tools to calculate the price of anarchy for the common-lines problem. This allows us to analyse different scenarios, detect under which conditions the system becomes inefficient, and thus make decisions that allow redirection of flow over the different lines in such a way that the best service is guaranteed to the users.

\section{Numerical Implementation}
\label{sec:numerical_implementation}

In this section, we propose two examples to analyse the equilibrium assignment and the social optimum. We will study the evolution of the social cost in the equilibrium situation and the optimal system cost as demand increases, which will allow us to analyse the price of anarchy and, consequently, the efficiency of the network. 

In both examples, we use the same travel time, bus capacity, and nominal frequency data. However, we use different effective frequency functions extracted from the literature. The first example is approached from the characterisation of the equilibrium solution and the social optimum, given in Equation \eqref{eq:uschar} and Theorem \ref{teo:characterization}, respectively. The second example is approached by solving the corresponding optimisation problems. We will consider a network with two nodes and two arcs connecting them, as shown in Figure \ref{fig:example_two_nodes}. Travel times, bus capacity, and nominal frequencies used will be detailed in the examples.

\begin{figure}
    \centering
\begin{tikzpicture}[thick,scale=0.6, every node/.style={scale=0.8}]
\node at (0,0) [nodo] (O) {$o$};
\node at (5,0) [nodo] (D) {$d$};
\draw [->] (O) to [bend left=45] node[above] {$1$} (D);
\draw [->] (O) to [bend right=45] node[below] {$2$} (D);
\end{tikzpicture}
\caption{Network with one origin ($o$), one destination ($d$) and two arcs ($A=\{1,2\}$).}
\label{fig:example_two_nodes}
\end{figure}
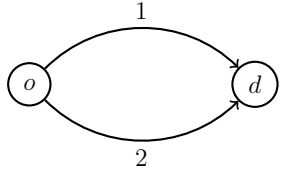

\subsection{Example from solution characterisation}
\label{sec:example_charac}

First, we will present the example given in \cite{cominetti2001common} for the common-lines problem under congestion and calculate the social optimal line flow. Assume a bus stop served by two lines with travel times $t_1<t_2$, independent Poisson arrivals of rate $\mu_1$ and $\mu_2$, and equal capacity $K$. In that case, the analytic expressions for each effective frequency function are
\begin{equation} \label{eq:freqtoy} 
f_i(v_i)=v_i\left( \frac{1}{\rho_i(v_i)} -1 \right) \quad i=1,2,
\end{equation}
with $\bar{v}_i=K\mu_i$ and $\rho_i(v_i)$ the unique solution $\rho \in [0,1)$ of the equation $\mu_i(\rho+\rho^2+\dots+\rho^K)=v_i$. This function is differentiable with $f'_i(v_i)<0$ and $f_i(v_i) \rightarrow 0^{+}$ when $v_i \rightarrow \bar{v}_i$. The inverse $w_i(\alpha)$ of $v_i \to \frac{v_i}{f_i(v_i)}$ is
$$
w_i(\alpha)=\mu_i \alpha\left(1-[\alpha /(1+\alpha)]^K\right) \quad i=1,2,
$$
which is a concave increasing function. Furthermore 
$$ w'_i(\alpha)=\mu_i \left(1-[\alpha /(1+\alpha)]^K [1+K/(1+\alpha)]\right) \quad i=1,2,$$
which is a differentiable function. Then, the assumptions made in the preceding sections hold for the effective frequency function \eqref{eq:freqtoy}.

Therefore, by Theorem \eqref{teo:characterization}, for every $x \in (0,\bar{v}_1+\bar{v}_2)$ there exists a unique social optimum. To compute it explicitly, set $\mu=\frac{\mu_1}{\mu_1+\mu_2}$ and let $\alpha_2$ be the solution of $\bar{\lambda}(\alpha)=t_2$. That is
\begin{equation} \label{eqn:conditionexample}
    \psi_{\alpha}(t_2)=1 \Leftrightarrow (t_2-t_1)w'_1(\alpha)=1  \Leftrightarrow w'_1(\alpha)=\frac{1}{t_2-t_1}.
\end{equation}

By Lemma \ref{lem:alphak}, we know that if $$f_1(0)(t_2-t_1)=w_1'(0)(t_2-t_1)=\mu_1(t_2-t_1)<1$$
there is no $\alpha_2$ solving $\bar{\lambda}(\alpha)=t_2$, and then by Proposition \ref{prop:chain_alpha}, $\bar{\lambda}(\alpha)>t_2$ for all $\alpha \in (0,\infty)$. Therefore, according to Theorem \ref{teo:characterization}, there exist $\alpha_x$ such that $v_1^{\mbox{\tiny so}}=w_1(\alpha_x)$ and $v_2^{\mbox{\tiny so}}=w_2(\alpha_x)$. From the constrain $v_1^{\mbox{\tiny so}}+v_2^{\mbox{\tiny so}}=x$ and the definition of $w_i$, it follows that $v_1^{\mbox{\tiny so}}=\mu x$ and $v_2^{\mbox{\tiny so}}=(1-\mu)x$.

Consider now the case $\mu_1(t_2-t_1)\ge 1$, by Lemma \ref{lem:alphak} there exist $\alpha^{\mbox{\tiny so}}_2$ such that $\bar{\lambda}(\alpha^{\mbox{\tiny so}}_2)=t_2$. Let $l^{\mbox{\tiny so}}=\hat{x}(\alpha^{\mbox{\tiny so}}_2)$ and $u^{\mbox{\tiny so}}=\check{x}(\alpha^{\mbox{\tiny so}}_2)$: 
\begin{itemize}
    \item If $x<l^{\mbox{\tiny so}}$, then $\alpha_x <\alpha^{\mbox{\tiny so}}_2$ and $\bar{\lambda}(\alpha_x)<t_2$. Then from Theorem \ref{teo:characterization} we get $v^{\mbox{\tiny so}}_2=0$ and so $v^{\mbox{\tiny so}}_1=x$.
    \item If $x=l^{\mbox{\tiny so}}$, then $x=w_1(\alpha_x)$ with $\alpha_x =\alpha^{\mbox{\tiny so}}_2$ and $t_1<\bar{\lambda}(\alpha_x)=t_2$. Then from Theorem \ref{teo:characterization} we get $v^{\mbox{\tiny so}}_1=w_1(\alpha^{\mbox{\tiny so}}_2)=x$ and so $v^{\mbox{\tiny so}}_2=0$. 
    \item If $l^{\mbox{\tiny so}}<x<u^{\mbox{\tiny so}}$, then $\alpha_x =\alpha^{\mbox{\tiny so}}_2$ and $t_1<\bar{\lambda}(\alpha_x)=t_2$. Then from Theorem \ref{teo:characterization} we get $v^{\mbox{\tiny so}}_1=w_1(\alpha_x)=l^{\mbox{\tiny so}}$ and so $v^{\mbox{\tiny so}}_2=x-l^{\mbox{\tiny so}}$.
    \item If $x=u^{\mbox{\tiny so}}$, then $x=w_1(\alpha_x)+w_2(\alpha_x)$. Furthermore, $\alpha_x =\alpha^{\mbox{\tiny so}}_2$ and $t_1<\bar{\lambda}(\alpha_x)=t_2$. Then from Theorem \ref{teo:characterization} we get $v^{\mbox{\tiny so}}_1=w_1(\alpha_x)=l^{\mbox{\tiny so}}$ and so $v^{\mbox{\tiny so}}_2=x-l^{\mbox{\tiny so}}=w_2(\alpha_x)$.
    \item If $x>u^{\mbox{\tiny so}}$, then $\alpha_x >\alpha^{\mbox{\tiny so}}_2$ and $\bar{\lambda}(\alpha_x)>t_2$. Then from Theorem \ref{teo:characterization} we get $v^{\mbox{\tiny so}}_1=w_1(\alpha_x)=\mu x$ and $v^{\mbox{\tiny so}}_2=w_2(\alpha_x)=(1-\mu)x$.
\end{itemize}

In summary, the social optimum flow assignment $v^{\mbox{\tiny so}}_1$ and the equilibrium flow assignment $v^{\mbox{\tiny w}}_1$ (calculated in \cite{cominetti2001common}) for this example are:
\begin{align} \label{eq:asignejem}
v_1^{\mbox{\tiny so}}= \begin{cases}x & \text { if } x \leq l^{\mbox{\tiny so}}, \\ l^{\mbox{\tiny so}} & \text { if } l^{\mbox{\tiny so}}<x<u^{\mbox{\tiny so}},\quad  \\ \mu x & \text { if } x \geq u^{\mbox{\tiny so}} ;\end{cases}  &  v_1^{\mbox{\tiny w}}= \begin{cases}x & \text { if } x \leq l^{\mbox{\tiny w}}, \\ l^{\mbox{\tiny w}} & \text { if } l^{\mbox{\tiny w}}<x<u^{\mbox{\tiny w}}, \quad\\ \mu x & \text { if } x \geq u^{\mbox{\tiny w}} ;\end{cases}
\end{align}
where 
$$
 l^{\mbox{\tiny w}}:=\sum_{i=1}^2\left\{w_i(\alpha^{\mbox{\tiny w}}_2): t_i<\widehat{T}(w(\alpha^{\mbox{\tiny w}}_2))\right\} \mbox{ and }
 u^{\mbox{\tiny w}}:=\sum_{i=1}^2\left\{w_i(\alpha^{\mbox{\tiny w}}_2): t_i \leq \widehat{T}(w(\alpha^{\mbox{\tiny w}}_2))\right\},
$$
and $\alpha^{\mbox{\tiny w}}_2$ is the solution of $\widehat{T}(w(\alpha))=t_2$. As we can see, the social optimal assignment takes the same values as the equilibrium assignment, but the difference lies in the threshold values $l$ and $u$. Let us show that the solution of $\bar{\lambda}(\alpha)=t_2$ is less than the solution of $\widehat{T}(w(\alpha))=t_2$ corresponding to the equilibrium problem, and therefore $l^{\mbox{\tiny so}}<l^{\mbox{\tiny w}}$ and $u^{\mbox{\tiny so}}<u^{\mbox{\tiny w}}$.

Since $\alpha^{\mbox{\tiny so}}_2$ is the solution of $\bar{\lambda}(\alpha)=t_2$, it follows that
\eqref{eqn:conditionexample} holds, so $\alpha^{\mbox{\tiny so}}_2$ is the solution of
$$1-\frac{1}{\mu_1(t_2-t_1)}=\left(\frac{\alpha}{1+\alpha}\right)^K \left( 1 + \frac{K}{1+\alpha} \right). $$ 
The right-hand side of the above equation is an increasing continuous function of $\alpha$. Similarly, since $\alpha^{\mbox{\tiny w}}_2$ is the solution of $\widehat{T}(w(\alpha))=t_2$, we get that $\alpha^{\mbox{\tiny w}}_2$ is the solution of 
$$1-\frac{1}{\mu_1(t_2-t_1)}=\left(\frac{\alpha}{1+\alpha}\right)^K. $$
In that case, the right hand is also an increasing continuous function of $\alpha$. Furthermore
$$\left(\frac{\alpha}{1+\alpha}\right)^K < \left(\frac{\alpha}{1+\alpha}\right)^K \left( 1 + \frac{K}{1+\alpha} \right)$$
for all $\alpha \in (0, \infty)$. Therefore, $\alpha^{\mbox{\tiny so}}_2<\alpha^{\mbox{\tiny w}}_2$. Then, since $w_i$' s are increasing functions, we get $l^{\mbox{\tiny so}}<l^{\mbox{\tiny w}}$ and $u^{\mbox{\tiny so}}<u^{\mbox{\tiny w}}$. 

To compute the social cost, we will use the arc flow formulation given by \eqref{eq:social_cost_arcs}. From \eqref{eq:asignejem} we get
\begin{itemize}
    \item If $x \le l^{\mbox{\tiny so}}$,
    $$SC=t_1x + t_20+\max\left( \frac{x}{f_1(x)}, \frac{0}{f_2(0)} \right)=t_1x+\frac{\rho_1(x)}{1-\rho_1(x)}$$
    \item If $l^{\mbox{\tiny so}}<x<u^{\mbox{\tiny so}}$,
    $$SC=t_1l^{\mbox{\tiny so}} + t_2(x-l^{\mbox{\tiny so}})+\max\left( \frac{l^{\mbox{\tiny so}}}{f_1(l^{\mbox{\tiny so}})}, \frac{x-l^{\mbox{\tiny so}}}{f_2(x-l^{\mbox{\tiny so}})} \right)$$
    From the constraint $l^{\mbox{\tiny so}}<x<u^{\mbox{\tiny so}}$, it follows that
    $$ x-l^{\mbox{\tiny so}} < u^{\mbox{\tiny so}}-l^{\mbox{\tiny so}}=w_2(\alpha^{\mbox{\tiny so}}_2)=\alpha^{\mbox{\tiny so}}_2 f_2(w_2(\alpha^{\mbox{\tiny so}}_2)),$$
    where the last equality follows from \eqref{eqn:defw}. Furthermore $f_2(\cdot)$ is a decreasing function, so
    $$ \frac{x-l^{\mbox{\tiny so}}}{f_2(x-l^{\mbox{\tiny so}})}< \frac{u^{\mbox{\tiny so}}-l^{\mbox{\tiny so}}}{f_2(u^{\mbox{\tiny so}}-l^{\mbox{\tiny so}})}=\frac{\alpha^{\mbox{\tiny so}}_2 f_2(w_2(\alpha^{\mbox{\tiny so}}_2))}{f_2(w_2(\alpha^{\mbox{\tiny so}}_2))}=\alpha^{\mbox{\tiny so}}_2.$$
    On the other hand, 
    $$ \frac{l^{\mbox{\tiny so}}}{f_1(l^{\mbox{\tiny so}})}=\frac{w_1(\alpha^{\mbox{\tiny so}}_2)}{f_1(w_1(\alpha^{\mbox{\tiny so}}_2))}=\frac{\alpha^{\mbox{\tiny so}}_2 f_1(w_1(\alpha^{\mbox{\tiny so}}_2))}{f_1(w_1(\alpha^{\mbox{\tiny so}}_2))}=\alpha^{\mbox{\tiny so}}_2,$$
    where the last equation follows again from \eqref{eqn:defw}. Therefore
    $$\max\left( \frac{l^{\mbox{\tiny so}}}{f_1(l^{\mbox{\tiny so}})}, \frac{x-l^{\mbox{\tiny so}}}{f_2(x-l^{\mbox{\tiny so}})} \right)=\alpha^{\mbox{\tiny so}}_2,$$
    and we can rewrite the social optimum as
    $$ SC=l^{\mbox{\tiny so}}t_1 + (x-l^{\mbox{\tiny so}})t_2 + \alpha^{\mbox{\tiny so}}_2=xt_2-l^{\mbox{\tiny so}}(t_2-t_1) + \alpha^{\mbox{\tiny so}}_2 =xt_2-\frac{w_1(\alpha^{\mbox{\tiny so}}_2)}{w'_1(\alpha^{\mbox{\tiny so}}_2)}+ \alpha^{\mbox{\tiny so}}_2.$$
    Finally, using \eqref{eqn:defw} and \eqref{eq:w_derivative} we get
    $$ SC=t_2x+\left(\alpha^{\mbox{\tiny so}}_2\right)^2f_1'(w_1(\alpha^{\mbox{\tiny so}}_2)).$$
    \item If $x \ge u^{\mbox{\tiny so}}$,
    $$SC=t_1 \mu x + t_2(1-\mu)x+\max\left( \frac{\mu x}{f_1(\mu x)}, \frac{(1-\mu)x}{f_2((1-\mu)x)} \right)=t_1 \mu x+ t_2 (1-\mu)x + \frac{\rho_1(\mu x)}{1-\rho_1(\mu x)}$$
    where the last equality follows from the fact that $\rho_1(\mu x)=\rho_2((1-\mu)x)$.
\end{itemize}

In summary, setting $t^{\mu}=\mu t_1 + (1- \mu)t_2$, the optimum social cost, and the equilibrium cost are

\begin{align} \label{eq:costejem_osc}
OSC= \begin{cases} t_1x+\frac{\rho_1(x)}{1-\rho_1(x)} & \text { if } x \leq l^{\mbox{\tiny so}}, \\ t_2x+\left(\alpha^{\mbox{\tiny so}}_2\right)^2f_1'(w_1(\alpha^{\mbox{\tiny so}}_2)) & \text{ if } l^{\mbox{\tiny so}}<x<u^{\mbox{\tiny so}}, \quad \\ t^\mu x+\frac{\rho_1(\mu x)}{1-\rho_1(\mu x)} & \text{ if } x \geq u^{\mbox{\tiny so}} ,\end{cases} 
\end{align}

\begin{align} \label{eq:costejem_wsc}
 WSC = \begin{cases} t_1x+\frac{\rho_1(x)}{1-\rho_1(x)} & \text{ if } x \leq l^{\mbox{\tiny w}}, \\ t_2x & \text{ if } l^{\mbox{\tiny w}}<x<u^{\mbox{\tiny w}}, \\ t^\mu x+\frac{\rho_1(\mu x)}{1-\rho_1(\mu x)} & \text{ if } x \geq u^{\mbox{\tiny w}} .\end{cases}
\end{align}

With this development, we obtained the formulation that allows us to calculate the social optimum assignment and the optimal social cost, equivalent to the equilibrium case reported in the literature. To illustrate the difference between the social optimum assignment and the equilibrium assignment we set the travel times as $t_1=1/4$ and $t_2=1/2$ (in hours), the capacity as $K=20$ passengers per bus, and the arrival rates as $\mu_1 = 16$ and $\mu_2 = 10$ buses per hour. With these data, we were able to explicitly calculate the values for $l^{\mbox{\tiny so}}$, $u^{\mbox{\tiny so}}$, $ l^{\mbox{\tiny w}}$ and $ u^{\mbox{\tiny w}}$, obtaining:

\begin{equation}
    l^{\mbox{\tiny so}} = 202.77, ~~ u^{\mbox{\tiny so}} =329.51, ~~ l^{\mbox{\tiny w}} = 276.09, ~~ u^{\mbox{\tiny w}} =448.65.
\end{equation}
Once we have these values, it is possible to calculate the equilibrium and social optimum assignment according to \eqref{eq:asignejem} and the corresponding social cost for each case. Figure \ref{fig:flows_toy_example} shows the evolution of flows for the equilibrium and the social optimum assignment, while Figure \ref{fig:social_cost_and_poa_toy_example} shows the social costs and the corresponding price of anarchy for this example. In Figure \ref{fig:poa_toy_example}  it can be seen that the price of anarchy is greater than 1 for $ l^{\mbox{\tiny so}} \leq x \leq u^{\mbox{\tiny w}}$, that is, for demands between 202 and 448 passengers (approximately) the network becomes inefficient in an equilibrium situation. Moreover, the maximum price of anarchy occurs when the demand is $x=l^{\mbox{\tiny w}} = 276$ passengers (approximately).

\begin{figure}[t!]
\centering
\subfigure[Equilibrium assignment.]{\includegraphics[width=0.49\textwidth]{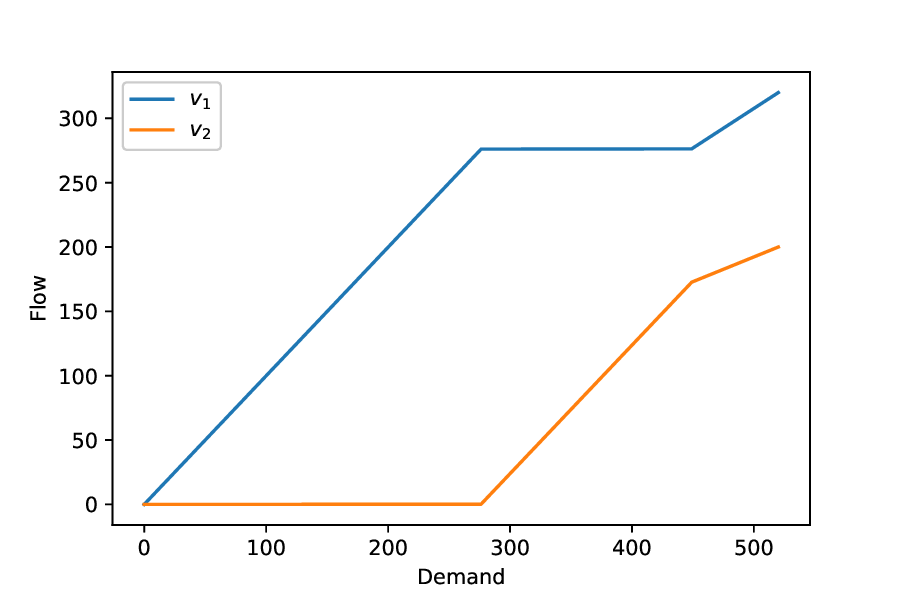}\label{fig:equilibrium_flows_toy_example}}
\subfigure[Social optimum assignment.]{\includegraphics[width=0.49\textwidth]{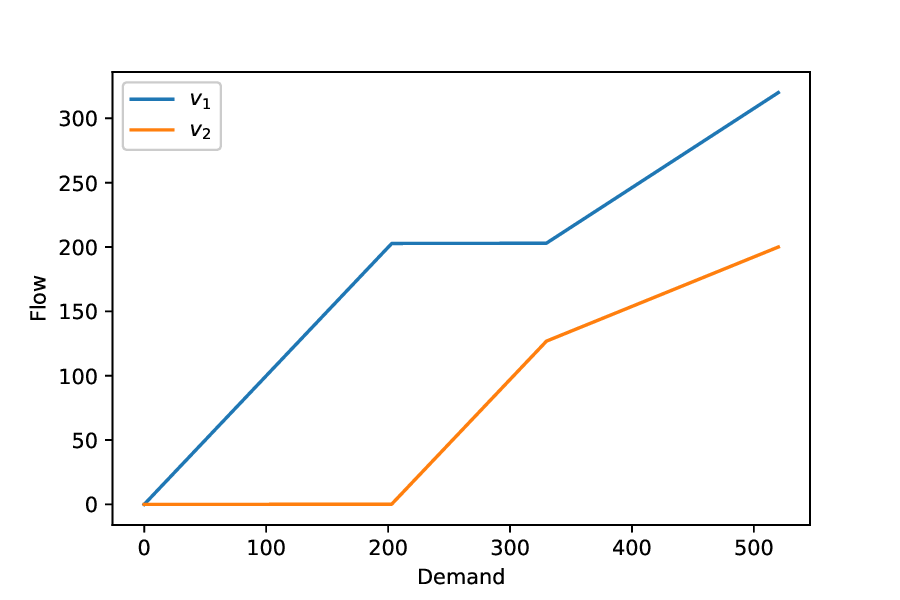}\label{fig:os_flows_toy_example}}
\caption{Evolution of arc flows for the example based on the solution characterisation.} \label{fig:flows_toy_example}
\end{figure}

\begin{figure}[t!]
\centering
\subfigure[Social cost for equilibrium assignments and social optimum.]{\includegraphics[width=0.49\textwidth]{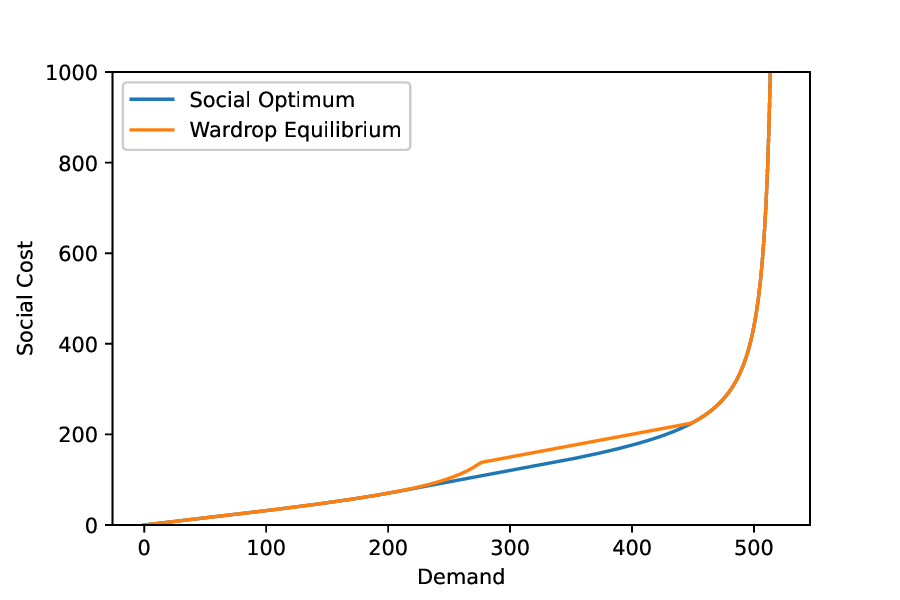}\label{fig:social_cost_toy_example}}
\subfigure[Price of anarchy.]{\includegraphics[width=0.49\textwidth]{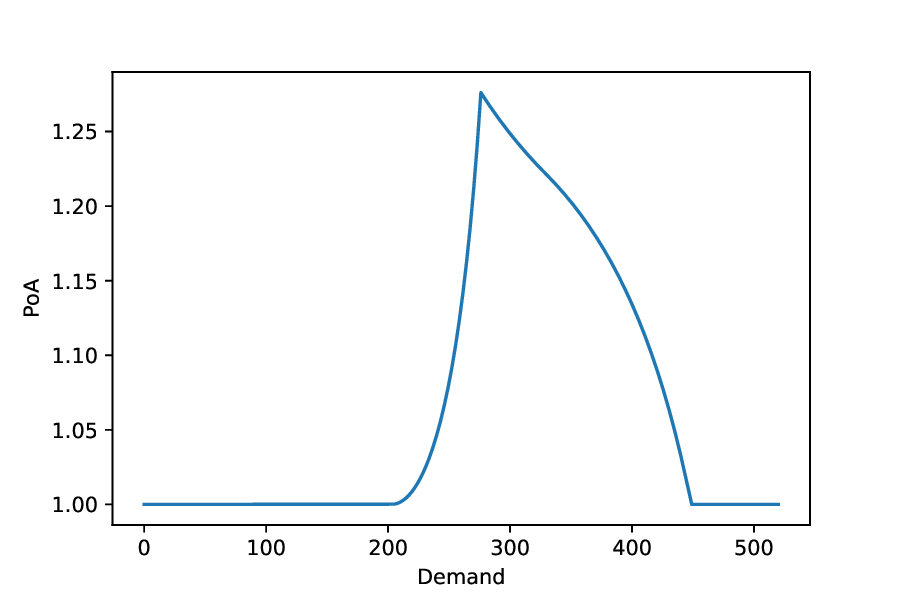}\label{fig:poa_toy_example}}
\caption{Evolution of the social cost and the price of anarchy for the example based on the solution characterisation.} \label{fig:social_cost_and_poa_toy_example}
\end{figure}

\subsection{Example from optimisation problems}
\label{sec:example_optim}

In this example, we will use the effective frequency function proposed in \cite{CCF}. In the case of a network with only two nodes (one origin and one destination), the origin will only have passengers boarding (no alighting), and the destination will only have passengers alighting (no boarding), giving the following frequency function for each arc $i \in A$:

\begin{equation}\label{eq:frequency}
f_i(v_i) = \left\{ \begin{array}{lcl} 
\mu \left[ 1- \left( \frac{v_i}{\mu K} \right)^{\beta} \right] & \text{if} & 0 \leq v_i < \mu K, \\ 
\\ \varepsilon & \text{otherwise,}  
\end{array} \right.
\end{equation}
where $\mu$ is the nominal frequency of the line, $K$ is the bus capacity, and $v_i$ is the flow boarding the line. The exponent $\beta$ is always positive and determines the effective frequency's updating degree. Finally, $\varepsilon$ is a very small value that represents a high waiting time at the stop when the bus does not have the capacity for new passengers to board. As in \cite{CCF}, we will consider $\varepsilon=1/999$. As seen in Appendix \ref{app:f}, the function \eqref{eq:frequency} satisfies all the necessary conditions to formulate the social optimum problem as developed in Section \ref{sec:SO}. For this frequency function, it is challenging to obtain explicitly the function $w$. It could be obtained numerically, or else not to solve the problem from the solution characterisation (as was done in Section \ref{sec:example_charac}) but to solve the corresponding optimisation problem. To obtain the Wardrop equilibrium, we solve the problem proposed in \cite{CCF}, and to obtain the social optimum, we can solve the problem formulated using strategy flows (problem \eqref{eq:social_optimimum_original}) or arc flows (problem \eqref{eq:social_optimum_without_w}).

We consider the network shown in Figure \ref{fig:example_two_nodes} and the same data described at the end of Section \ref{sec:example_charac}. The only additional parameter is the exponent $\beta$, which we consider to be $\beta=0.2$. With this information, in the first instance, we assigned a demand of $x=100$ passengers per hour and performed the equilibrium and social optimum assignments. For the equilibrium assignment, we use a self-regulated method as proposed in \cite{ORLANDO2023100832}. To obtain the social optimum, we solved the original problem \eqref{eq:social_optimimum_original} formulated in terms of strategy flows, and we obtained the same solution as when solving the problem \eqref{eq:social_optimum_without_w}, which only involves the arc flows. To solve the social optimum problems, we employed the SciPy library v1.8.0 (\cite{SciPy_NMeth2020}), which is accessible in Python v3.8.5 (\cite{python}). Specifically, we utilised the \textit{minimize} function within this library, wherein we defined the objective functions and established linear and nonlinear constraints for each case. The trust constraint method was chosen and we configured the maximum number of iterations to be $4000$,  with termination tolerances set at $gtol=10^{-8}$ for the norm of the Lagrangian gradient and $xtol=10^{-8}$ for the change in the independent variable. The results for equilibrium and social optimum are shown in Table \ref{table:results_example_2}.

\begin{table}[b!]
\begin{tabular*}{\hsize}{@{\extracolsep{\fill}}ccc@{}}
\hline
Results &  Equilibrium assignment & Social Optimum \\
\hline
$v_1$  &  $75.94$  & $61.54$  \\ 
$v_2$ & $24.06$  & $38.46$  \\ 
 Social Cost & $50$  & $48.309$ \\  
 \hline
\end{tabular*}
\caption{Arc flow and social cost obtained in the equilibrium assignment and the social optimum.}
\label{table:results_example_2}
\end{table}

For these results, we can calculate the price of anarchy as stated in \eqref{eq:price_of_anarchy}, obtaining:

$$PoA = \frac{WSC}{OSC} =\frac{50}{48.309}=1.035.$$
As the price of anarchy is greater than 1, we can conclude that the system becomes inefficient in an equilibrium situation for the considered demand.

As in Section \ref{sec:example_charac}, it would also be interesting to analyse the system efficiency as demand increases. For this purpose, we make flow assignments for demands between $1$ and $160$ passengers and analyse the progress of the flow assignments, the social cost associated with the equilibrium ($WSC$) and the optimal system cost ($OSC$), and the consequent evolution of the price of anarchy. Figure \ref{fig:flows_second_example} shows the flow assignments, while Figure \ref{fig:cost_and_poa_second_example} exposes the social costs and the price of anarchy. As mentioned above, for the frequency function used in this example, it is not easy to find an expression for the function $w(\alpha)$. However, $w(\alpha)$ and $w'(\alpha)$ can be found numerically, making it possible to calculate:

\begin{equation}
    l^{\mbox{\tiny so}} = 38.59, ~~ u^{\mbox{\tiny so}} =62.72, ~~ l^{\mbox{\tiny w}} = 75.94, ~~ u^{\mbox{\tiny w}} =123.4.
\end{equation}
If we analyse Figure \ref{fig:poa_second_example}, we observe that the network becomes inefficient in the equilibrium situation when $l^{\mbox{\tiny so}} \leq x \leq u^{\mbox{\tiny w}}$. That is to say, if demand varies between 39 and 124 passengers (approximately), the network becomes inefficient when users behave selfishly. In addition, the maximum price of anarchy occurs when $x=l^{\mbox{\tiny w}}$ (approximately 76 passengers).

From what has been exposed in this Section and Section \ref{sec:example_charac}, we can conclude that, although when considering different frequency functions the values of $l^{\mbox{\tiny w}}$, $l^{\mbox{\tiny so}}$, $u^{\mbox{\tiny w}}$ and $u^{\mbox{\tiny so}}$ changed, in both cases the system becomes inefficient in front of the selfish behaviour when $l^{\mbox{\tiny so}} \leq x \leq  u^{\mbox{\tiny w}}$, and reaches the maximum point of inefficiency when $x=l^{\mbox{\tiny w}}$.

\begin{figure}[t!]
\centering
\subfigure[Equilibrium assignment.]{\includegraphics[width=0.49\textwidth]{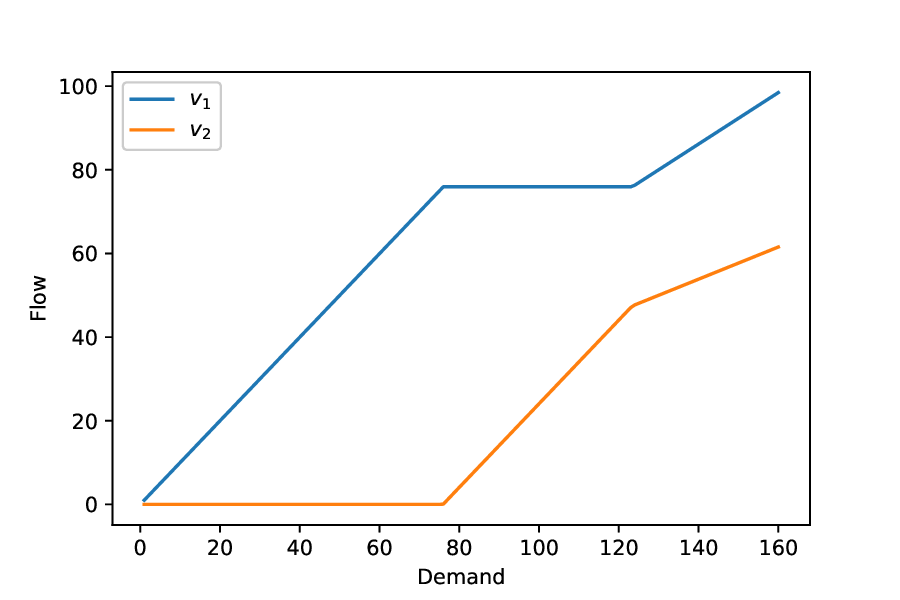}\label{fig:flujos_equilibrio}}
\subfigure[Social optimum assignment.]{\includegraphics[width=0.49\textwidth]{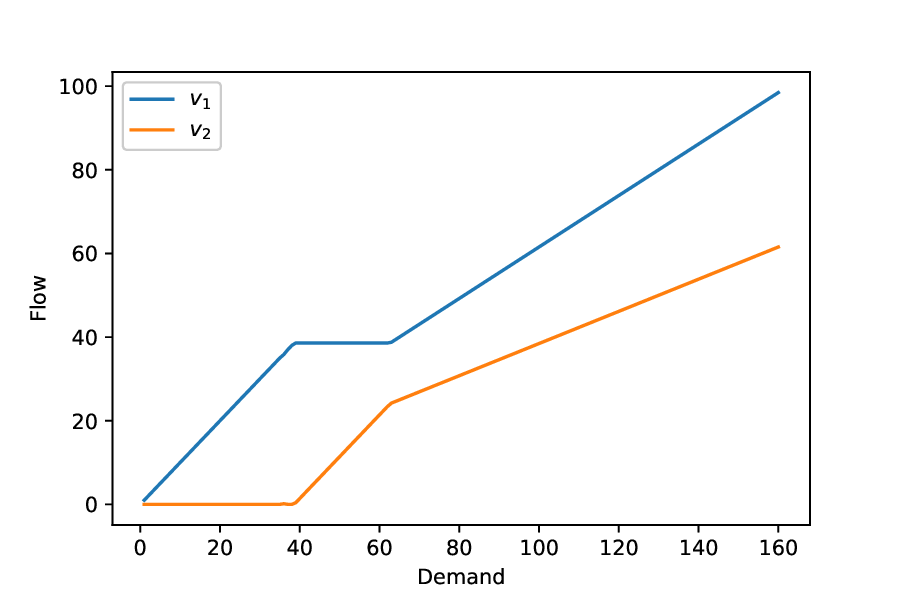}\label{fig:flujos_os}}
\caption{Evolution of arc flows for the example based on the optimisation problem.} \label{fig:flows_second_example}
\end{figure}

\begin{figure}[t!]
\centering
\subfigure[Social cost for equilibrium assignments and social optimum.]{\includegraphics[width=0.49\textwidth]{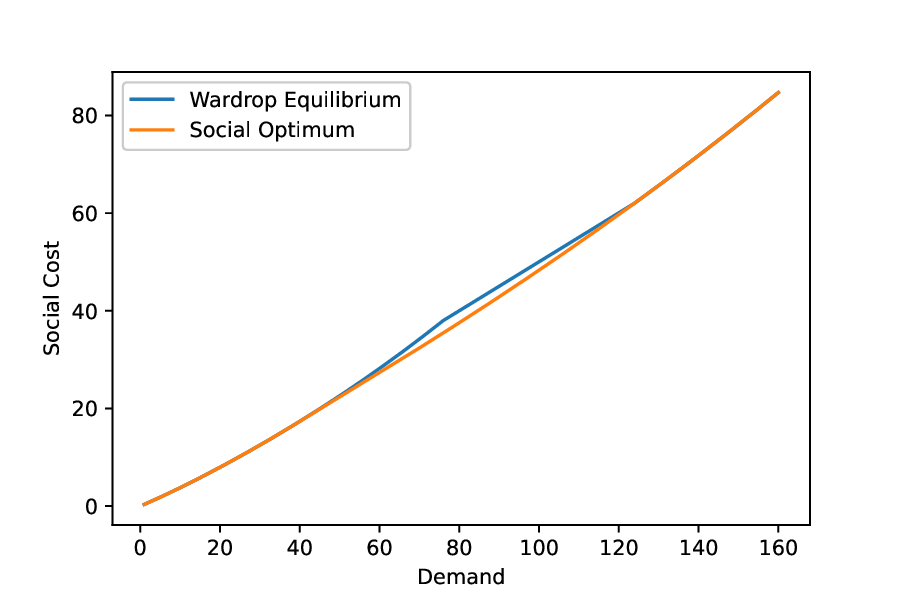}\label{fig:social_costs_example}}
\subfigure[Price of anarchy.]{\includegraphics[width=0.49\textwidth]{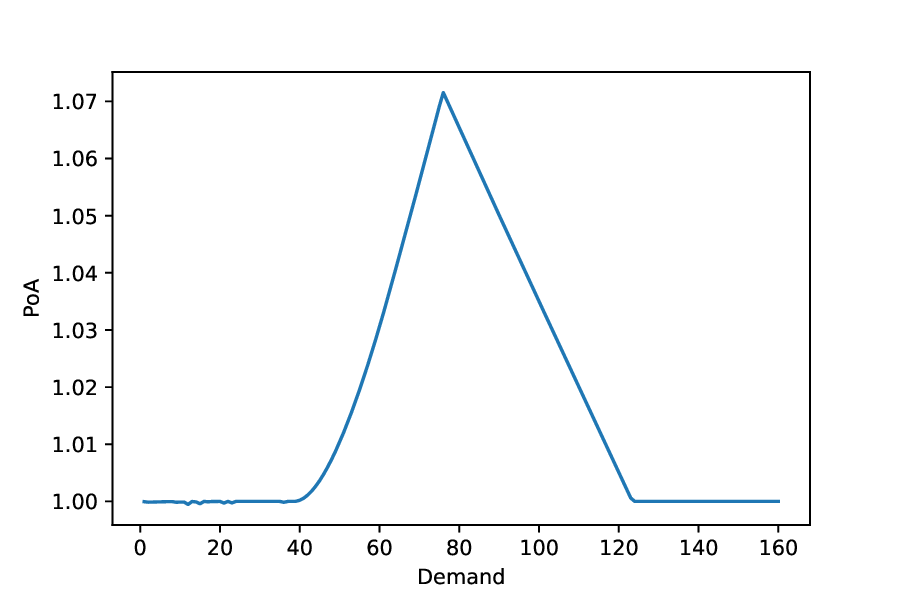}\label{fig:poa_second_example}}
\caption{Evolution of the social cost and the price of anarchy for the example based on the optimisation problem.} \label{fig:cost_and_poa_second_example}
\end{figure}

\section{Conclusions}

Throughout this work, we have presented the social optimum problem applied to the common-lines problem, which had not been studied until now. We presented the approach using strategy flows, and from this approach, we arrived at a formulation that only uses arc flows, which makes it easier to handle and interpret. We have formulated and proved some results that allow us to obtain a characterisation of the solution to the social optimum problem. From it, the analysis of the price of anarchy becomes possible. Although this concept has already been studied in public transportation networks, we did not find an analysis for the common-lines problem since knowledge of the solution that guarantees the optimal operation of the system is required to calculate the price of anarchy.

Finally, to illustrate all the concepts previously developed, two examples were presented to compare the solutions with those found in the literature for the Wardrop Equilibrium and to analyse the evolution of the system efficiency as demand varies. With these examples, we show that we can solve the equilibrium problem and the social optimum problem and analyse the evolution of the social cost in each case, either by solving the corresponding optimisation problems or by working from the characterisation of the solution.

The analysis developed in this work is based on simple networks with one origin node and one destination node. This work is the beginning of a future analysis for general networks, where intermediate nodes are considered that allow passenger boarding, alighting, and line interchanges.

\appendix
\section{Properties of $w_i$} \label{app:w}

Let $w_i:[0,\infty) \to [0,\bar{v}_i)$ be the inverse function of $v_i \mapsto \frac{v_i} {f_i\left(v_i\right)}$ for all $i=1,\dots,n$.

\begin{proposition} \label{prop:w}
$w_i$ is a differentiable function, with $w'_i(\alpha)>0$ for all $\alpha \in (0,\infty)$ (and then strictly increasing). Furthermore, if $f_i$ is $C^1[0,\bar{v}_i)$ then $w'_i$ is continuous. 
\end{proposition}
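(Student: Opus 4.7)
The plan is to obtain $w_i$ as the inverse of the smooth strictly increasing function $g_i(v):=v/f_i(v)$ and then read off the differentiability and positivity of $w_i'$ from the inverse function theorem. First I would record that $g_i\colon [0,\bar{v}_i)\to[0,\infty)$ is smooth (it is a quotient of smooth functions with non-vanishing denominator, since $f_i>0$), and that $g_i(0)=0$ while $g_i(v)\to\infty$ as $v\to\bar{v}_i$ because $f_i(v)\to 0^+$ and $v\to\bar{v}_i>0$. Hence $g_i$ is a smooth bijection of $[0,\bar{v}_i)$ onto $[0,\infty)$ as soon as I verify strict monotonicity.

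Next I would compute
\begin{equation*}
g_i'(v)=\frac{f_i(v)-v f_i'(v)}{f_i(v)^2}
\end{equation*}
and observe that the numerator is strictly positive on $[0,\bar{v}_i)$: at $v=0$ it equals $f_i(0)>0$, and for $v>0$ we use $f_i(v)>0$ together with the hypothesis $f_i'(v)<0$, so $-v f_i'(v)>0$. Thus $g_i'>0$ everywhere, which together with the boundary values confirms that $g_i$ is a smooth strictly increasing bijection onto $[0,\infty)$.

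By the inverse function theorem, $w_i=g_i^{-1}$ is differentiable on $(0,\infty)$ with
\begin{equation*}
w_i'(\alpha)=\frac{1}{g_i'(w_i(\alpha))}=\frac{f_i(w_i(\alpha))^2}{f_i(w_i(\alpha))-w_i(\alpha)\,f_i'(w_i(\alpha))}>0,
\end{equation*}
which gives both differentiability and strict monotonicity of $w_i$. Finally, for the $C^1$ statement, if $f_i\in C^1[0,\bar{v}_i)$ then the numerator and denominator in the formula for $g_i'$ are continuous, so $g_i'$ is continuous and non-vanishing; combining this with continuity of $w_i$ (automatic, since $w_i$ is the inverse of a continuous strictly monotone function) shows that $w_i'=1/(g_i'\circ w_i)$ is continuous.

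I do not anticipate a serious obstacle here: the argument is essentially a direct application of the inverse function theorem, and the only place requiring a moment of care is checking positivity of $g_i'$ at the endpoint $v=0$, where the factor $v$ multiplying $f_i'(v)$ vanishes and one must fall back on $f_i(0)>0$.
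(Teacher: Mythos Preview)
Your proposal is correct and takes essentially the same approach as the paper: both exploit the inverse relationship between $w_i$ and $g_i(v)=v/f_i(v)$ and arrive at the same formula for $w_i'$ (the paper writes it as $f_i(w_i(\alpha))/[1-\alpha f_i'(w_i(\alpha))]$, which equals your expression upon substituting $\alpha=w_i(\alpha)/f_i(w_i(\alpha))$). The only cosmetic difference is that the paper differentiates the implicit relation $w_i(\alpha)=\alpha f_i(w_i(\alpha))$ and solves for $w_i'$, whereas you first verify $g_i'>0$ and then invoke the inverse function theorem; your route is slightly more careful in that differentiability of $w_i$ is justified before the derivative is computed.
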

\begin{proof}
    Since $w_i$ is the inverse function of $v_i \mapsto \frac{v_i} {f_i\left(v_i\right)}$, then $\frac{w_i(\alpha)}{f_i(w_i(\alpha))}=\alpha $ and
    \begin{equation} \label{eqn:defw}
        w_i(\alpha) = \alpha f_i(w_i(\alpha)). 
        \end{equation}
    Differentiating the last expression, we have
    $$ w'_i(\alpha) = f_i(w_i(\alpha)) +\alpha f'_i(w_i(\alpha))w'_i(\alpha).$$
   Since $f'_i(v_i)<0$ for all $v_i \in [0,\bar{v}_i)$ and $\alpha \ge 0$, it follows that
    \begin{equation}\label{eq:w_derivative}
        w'_i(\alpha) = \frac{f_i(w_i(\alpha))}{1-\alpha f'_i(w_i(\alpha))}
    \end{equation}
    and then $w_i$ is differentiable. Furthermore, since $f_i(v_i)>0$ for all $v_i \in [0,\bar{v}_i)$, it follows that $w'_i(\alpha)>0$. Finally, if $f_i(\cdot)$ and $f_{i}'(.)$ are continuous, it is easy to see from \eqref{eq:w_derivative} that $w_i'$ is also continuous. 
\end{proof}

\begin{proposition}\label{prop:concavity_w}
    Let $f_i:\left[0, \bar{v}_i\right) \rightarrow(0, \infty)$ be a family of frequency functions for each $i = 1, \dots,n$ such that $f_i''$ exists. Then, $w_i$ is concave if and only if  
    $$2f_i'(w_i(\alpha)) + \alpha f_i''(w_i(\alpha))w_i'(\alpha)<0.$$ 
\end{proposition}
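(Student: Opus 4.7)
The plan is to compute $w_i''(\alpha)$ explicitly and read off its sign. Concavity of $w_i$ is equivalent to $w_i''(\alpha) \leq 0$ for all $\alpha$, so the proof reduces to showing that $w_i''(\alpha)$ has the same sign as the quantity $2f_i'(w_i(\alpha)) + \alpha f_i''(w_i(\alpha))w_i'(\alpha)$.

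First, I would start from the defining relation \eqref{eqn:defw}, namely $w_i(\alpha) = \alpha f_i(w_i(\alpha))$, or equivalently from formula \eqref{eq:w_derivative} written in cleared form
\begin{equation*}
  w_i'(\alpha)\bigl(1 - \alpha f_i'(w_i(\alpha))\bigr) = f_i(w_i(\alpha)).
\end{equation*}
Differentiating both sides with respect to $\alpha$ using the chain rule (this is legitimate because $f_i''$ exists and $w_i$ is differentiable by Proposition \ref{prop:w}) gives
\begin{equation*}
  w_i''(\alpha)\bigl(1 - \alpha f_i'(w_i(\alpha))\bigr) + w_i'(\alpha)\Bigl(-f_i'(w_i(\alpha)) - \alpha f_i''(w_i(\alpha))w_i'(\alpha)\Bigr) = f_i'(w_i(\alpha))w_i'(\alpha).
\end{equation*}
Solving for $w_i''(\alpha)$ and collecting the two $f_i'$ terms yields
\begin{equation*}
  w_i''(\alpha) = \frac{w_i'(\alpha)\bigl[\,2f_i'(w_i(\alpha)) + \alpha f_i''(w_i(\alpha))w_i'(\alpha)\,\bigr]}{1 - \alpha f_i'(w_i(\alpha))}.
\end{equation*}

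Next, I would analyse the sign of the factors outside the bracket. By Proposition \ref{prop:w} we have $w_i'(\alpha) > 0$ for $\alpha > 0$. For the denominator, the hypothesis $f_i'(v_i) < 0$ on $[0,\bar v_i)$ combined with $\alpha \geq 0$ gives $\alpha f_i'(w_i(\alpha)) \leq 0$, so $1 - \alpha f_i'(w_i(\alpha)) \geq 1 > 0$. Hence both $w_i'(\alpha)$ and $1 - \alpha f_i'(w_i(\alpha))$ are strictly positive, and the sign of $w_i''(\alpha)$ coincides exactly with the sign of $2f_i'(w_i(\alpha)) + \alpha f_i''(w_i(\alpha))w_i'(\alpha)$. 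This delivers both directions of the equivalence at once: $w_i$ is concave (strictly concave) precisely when the bracketed expression is nonpositive (negative) for every admissible $\alpha$.

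There is no real obstacle here beyond careful bookkeeping of the chain rule when differentiating the composite $f_i'(w_i(\alpha))$; the only subtlety worth mentioning is that one needs $f_i''$ to exist in order to justify the second differentiation, which is precisely the hypothesis. The equivalence with strict inequality corresponds to strict concavity, matching how the proposition will later be applied.
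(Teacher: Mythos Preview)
Your proposal is correct and follows essentially the same route as the paper: differentiate the implicit relation defining $w_i'$, solve for $w_i''$, and observe that the factors $w_i'(\alpha)$ and $1-\alpha f_i'(w_i(\alpha))$ are strictly positive so that the sign of $w_i''$ matches the bracketed expression. The only cosmetic difference is that you start from the cleared form of \eqref{eq:w_derivative} and factor out $w_i'(\alpha)$ directly, whereas the paper differentiates the uncleared identity $w_i'(\alpha)=f_i(w_i(\alpha))+\alpha f_i'(w_i(\alpha))w_i'(\alpha)$ and divides by $w_i'(\alpha)$ at the end; the resulting formula for $w_i''$ is identical.
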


\begin{proof}
    From the previous proposition, we know that 
    $$ w'_i(\alpha) = f_i(w_i(\alpha)) +\alpha f'_i(w_i(\alpha))w'_i(\alpha).$$
    Deriving on both sides, we obtain
    $$w_i''(\alpha) = f_i'(w_i(\alpha))w_i'(\alpha)+\left[ f_i'(w_i(\alpha)) + \alpha f_i''(w_i(\alpha))w_i'(\alpha)\right] w_i'(\alpha) + \alpha f_i'(w_i(\alpha))w_i''(\alpha)$$
    from where we can get
    $$w_i''(\alpha) = \frac{2f_i'(w_i(\alpha))w_i'(\alpha) + \alpha f_i''(w_i(\alpha))(w_i'(\alpha))^2}{1-\alpha f_i'(w_i(\alpha))}.$$
    Since $f_i'(w_i(\alpha))<0$ and $\alpha \geq 
0$, $1-\alpha f_i'(w_i(\alpha))$ is always positive, to ensure that $w_i$ is concave, it is only necessary that 
    $$2f_i'(w_i(\alpha))w_i'(\alpha) + \alpha f_i''(w_i(\alpha))(w_i'(\alpha))^2<0.$$
From Proposition \ref{prop:w} we know that $w_i'(\alpha)>0$ for all $\alpha \in [0, \infty)$, so the above inequality is equivalent to:
    $$2f_i'(w_i(\alpha)) + \alpha f_i''(w_i(\alpha))w_i'(\alpha)<0.$$

\end{proof}

Note that from \eqref{eq:w_derivative}, $w'_i$ is a frequency function for line $i$. Moreover, since $1-\alpha f'_i(w_i(\alpha))>1$, then $w_i'(\alpha)<f_i(\alpha).$ Let's see that $w'_i$ holds the requested properties for frequencies.

\begin{proposition} \label{prop:propdw}
    Assume that the function $w_i\in C^2(0,\infty)$ is strictly concave. Then $w_i'$ is a decreasing smooth function defined on $(0, \infty)$ with $w'_i(\alpha) \to 0$ when $\alpha \to \infty$. 
\end{proposition}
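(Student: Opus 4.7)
The plan is to treat the four assertions—being defined on $(0,\infty)$, smoothness, monotonicity, and the decay at infinity—separately, leaning on the explicit formula for $w_i'$ derived in Proposition \ref{prop:w} together with the boundary behaviour of $f_i$ near $\bar{v}_i$.

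Domain and smoothness are essentially in hand. Proposition \ref{prop:w} shows $w_i$ is differentiable on $(0,\infty)$, and the hypothesis $w_i\in C^2(0,\infty)$ immediately upgrades $w_i'$ to a $C^1$ function, which is what \emph{smooth} should mean in the present context. Monotonicity is also almost built in: the $C^2$ assumption plus strict concavity forces $w_i''\le 0$ on $(0,\infty)$, and strict concavity moreover ensures that $w_i'$ is strictly decreasing by a standard mean-value argument.

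The substantive point is the limit $w_i'(\alpha)\to 0$ as $\alpha\to\infty$. My plan is to sandwich $w_i'$ between $0$ and a quantity that manifestly tends to zero. Invoking formula \eqref{eq:w_derivative},
$$
w_i'(\alpha) \;=\; \frac{f_i(w_i(\alpha))}{1-\alpha f_i'(w_i(\alpha))}.
$$
Because $f_i'<0$ throughout $[0,\bar{v}_i)$ and $\alpha>0$, the denominator is at least $1$, so
$$
0 \;<\; w_i'(\alpha) \;\le\; f_i(w_i(\alpha)).
$$
It then suffices to verify that $f_i(w_i(\alpha))\to 0$.

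The remaining step—and the place where care is needed—is showing $w_i(\alpha)\to\bar{v}_i$ as $\alpha\to\infty$. Since $w_i$ is the inverse of the continuous, strictly increasing map $v_i\mapsto v_i/f_i(v_i)$, and this map tends to $+\infty$ as $v_i\to\bar{v}_i$ (the numerator stays bounded by $\bar{v}_i$ while $f_i(v_i)\to 0^+$ by hypothesis), its range is all of $[0,\infty)$. Hence $w_i(\alpha)\to\bar{v}_i$, and the assumption $f_i(v_i)\to 0$ closes the argument via the squeeze. I expect this propagation of the limit through the inversion of $v_i\mapsto v_i/f_i(v_i)$ to be the main—though not deep—obstacle, and everything else to be routine bookkeeping on top of Proposition \ref{prop:w}.
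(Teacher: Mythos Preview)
Your proposal is correct. The treatment of smoothness and monotonicity is the same as the paper's (the paper simply says $w_i''<0$ from strict concavity, hence $w_i'$ decreasing). For the limit $w_i'(\alpha)\to 0$, however, you take a slightly different route: you bound $w_i'(\alpha)$ above by $f_i(w_i(\alpha))$ via the explicit formula \eqref{eq:w_derivative} and then push the limit through using $w_i(\alpha)\to\bar{v}_i$ together with $f_i(v)\to 0$. The paper instead argues in one line that a concave function tending to the finite value $\bar{v}_i$ must have derivative tending to zero. Your argument is more explicit and has the minor advantage of not relying on $\bar{v}_i<\infty$ (the squeeze still works in the unbounded case since $f_i\to 0$ there too), while the paper's is shorter.
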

\begin{proof}
    If the function $w_i$ are strictly concave, then $w''_i(\alpha)<0$ for all $\alpha \in \mathbb{R}$. Then $w'_i$ is decreasing. Furthermore, $\lim\limits_{\alpha \to \infty}w_i'(\alpha)=0$ since $\lim\limits_{\alpha \to \infty}w_i(\alpha)=\bar{v}_i$ and $w_i$ is concave.
\end{proof}

\section{Properties of $\psi_{\alpha}$ function} \label{sec:app}

\begin{proposition} \label{prop:psidecr}
    Assume that the functions $w_i\in C^2(0,\infty)$ are strictly concave for all $i=1,\dots,n$. Then, for each $\lambda \ge t_1$, the function $\psi_{\alpha}(\lambda)$ is decreasing and continuous as a function of $\alpha$.
\end{proposition}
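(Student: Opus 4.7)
The plan is to read off both properties directly from the closed form $\psi_{\alpha}(\lambda) = \sum_{i=1}^n (\lambda-t_i)_+ \, w'_i(\alpha)$ by viewing it as a finite non-negative linear combination of the functions $\alpha \mapsto w'_i(\alpha)$, with coefficients $(\lambda - t_i)_+$ that depend only on the fixed $\lambda$ and not on $\alpha$. The role of the hypothesis $\lambda \ge t_1$ is simply to guarantee that the combination is non-trivial; without it the whole discussion would be vacuous because all coefficients would vanish.

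First I would appeal to Proposition \ref{prop:propdw}, which under the strict concavity and $C^2$ assumption says that each $w'_i$ is a decreasing smooth (hence continuous) function on $(0,\infty)$. Continuity of $\psi_{\alpha}(\lambda)$ in $\alpha$ is then immediate, since a finite linear combination of continuous functions is continuous. For monotonicity, every coefficient $(\lambda-t_i)_+ \ge 0$ is non-negative and every $w'_i$ is decreasing, so each term $(\lambda-t_i)_+ w'_i(\alpha)$ is a non-increasing function of $\alpha$, and so is the sum.

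To upgrade this to strict monotonicity (the reading of ``decreasing'' actually used later in the paper, e.g.\ in the proof that $\bar{\lambda}$ is increasing) I would isolate the index $i=1$. Using the assumption $t_1 < t_2 < \cdots < t_n$, in the case $\lambda > t_1$ the coefficient $(\lambda - t_1)_+ = \lambda - t_1$ is strictly positive, while $w'_1$ is strictly decreasing because $w_1$ is strictly concave and $C^2$. Hence the $i=1$ term is already strictly decreasing in $\alpha$, and adding the remaining non-increasing terms preserves strict decrease. The boundary case $\lambda = t_1$ reduces to $\psi_{\alpha}(t_1) \equiv 0$, which is only weakly decreasing; I would note this as a harmless degeneracy since the proposition is applied elsewhere only at values $\lambda > t_1$ (solutions of $\psi_\alpha(\lambda)=1$).

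I do not foresee a real obstacle: once Propositions \ref{prop:w} and \ref{prop:propdw} are in hand, the argument is purely bookkeeping on a finite sum of monotone continuous functions with non-negative weights. The only nuance worth flagging is that strictness of the decrease relies on strict concavity of at least the first function $w_1$, which is exactly what the hypothesis provides.
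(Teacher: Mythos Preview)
Your proposal is correct and follows essentially the same route as the paper: both arguments reduce to the observation that strict concavity of each $w_i$ makes $w'_i$ strictly decreasing, so the non-negative linear combination $\sum_i(\lambda-t_i)_+w'_i(\alpha)$ is decreasing in $\alpha$ (the paper simply differentiates and uses $w''_i<0$ directly rather than citing Proposition~\ref{prop:propdw}). Your treatment is in fact slightly more careful than the paper's, since you flag the degenerate boundary case $\lambda=t_1$ where all coefficients vanish and strictness fails.
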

\begin{proof}
    It is easy to see that $\psi_{\alpha}(\lambda)$ is continuous for fixed $\lambda$. Let us now $\lambda \ge t_1$. In particular, $\lambda \ge t_k$ for some $k$. If for each $i$, the functions $w_i$ are strictly concave, then $w''_i(\alpha)<0$ for all $\alpha \in \mathbb{R}$. Therefore, $$\frac{d}{d\alpha} \psi_{\alpha}(\lambda)=\sum_{i=1}^{k-1}(\lambda-t_i)w''_i(\alpha) <0$$  
    and the function $\psi_{\alpha}(\lambda)$ is strictly decreasing as a function of $\alpha$.
\end{proof}

We are interested in the values of $\alpha \in (0, \infty)$ that make equation \eqref{eq:lambdaalpha} have a solution.

\begin{proposition} \label{prop:chain_alpha}
    Assume that the functions $w_i\in C^2(0,\infty)$ are strictly concave for all $i=1,\dots,n$. Let $\lambda^*$, if there exists $\alpha^* \in (0,\infty)$ such that $\psi_{\alpha^*}(\lambda^*)=1$, then equation \eqref{eq:lambdaalpha} has a solution for all $\lambda>\lambda^*$. On the other hand, if there is no $\alpha^* \in (0,\infty)$ such that $\psi_{\alpha^*}(\lambda^*)=1$, then equation \eqref{eq:lambdaalpha} does not have a solution for all $\lambda<\lambda^*$.
\end{proposition}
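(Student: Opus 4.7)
The plan is to use the Intermediate Value Theorem applied to the map $\alpha \mapsto \psi_\alpha(\lambda)$, exploiting three properties: continuity and strict monotonicity in $\lambda$ (for $\lambda > t_1$) from the piecewise-linear structure of $\psi_\alpha$; continuity and strict decrease in $\alpha$ from Proposition \ref{prop:psidecr}; and $\psi_\alpha(\lambda)\to 0$ as $\alpha\to\infty$, which follows from $w_i'(\alpha)\to 0$ (Proposition \ref{prop:propdw}). Note that we may tacitly assume $\lambda^*>t_1$, since otherwise $\psi_{\alpha}(\lambda^*)=0\ne 1$ for every $\alpha$, so neither branch of the proposition has a nonvacuous antecedent.

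For the first claim, I would fix $\lambda>\lambda^*$ and consider the function $g(\alpha)=\psi_\alpha(\lambda)$. Since $\psi_{\alpha^*}$ is strictly increasing on $[t_1,\infty)$ and $\lambda>\lambda^*>t_1$, we have $g(\alpha^*)=\psi_{\alpha^*}(\lambda)>\psi_{\alpha^*}(\lambda^*)=1$. On the other hand, $g(\alpha)=\sum_{i=1}^n(\lambda-t_i)_+ w_i'(\alpha)\to 0$ as $\alpha\to\infty$ by Proposition \ref{prop:propdw}. Since $g$ is continuous, the IVT yields some $\alpha\in(\alpha^*,\infty)$ with $g(\alpha)=1$, which is exactly a solution of \eqref{eq:lambdaalpha} for this $\lambda$.

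For the contrapositive part, suppose that no $\alpha\in(0,\infty)$ satisfies $\psi_{\alpha}(\lambda^*)=1$. My first step would be to argue that then $\psi_\alpha(\lambda^*)<1$ for every $\alpha>0$. Indeed, if $\psi_{\alpha_0}(\lambda^*)>1$ for some $\alpha_0$, continuity of $\alpha\mapsto\psi_\alpha(\lambda^*)$ together with $\psi_\alpha(\lambda^*)\to 0$ would force a crossing by the IVT, contradicting the hypothesis. Once this is established, I would fix any $\lambda<\lambda^*$ and any $\alpha>0$, and use the monotonicity of $\psi_\alpha$ in $\lambda$ to conclude
\begin{equation*}
\psi_\alpha(\lambda)\le \psi_\alpha(\lambda^*)<1,
\end{equation*}
so \eqref{eq:lambdaalpha} admits no solution at this $\lambda$, completing the argument.

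The main obstacle is really only the bookkeeping around the trivial range $\lambda\le t_1$, where $\psi_\alpha\equiv 0$ and no $\alpha$ solves the equation; this edge case needs to be acknowledged but is harmless because the hypotheses of both branches implicitly force $\lambda^*>t_1$. Everything else reduces to the joint monotonicity of $\psi_\alpha(\lambda)$ in its two arguments and the limiting behaviour $w_i'(\alpha)\to 0$, which are already in hand.
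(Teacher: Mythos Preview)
Your proof is correct and follows essentially the same route as the paper: both parts hinge on the monotonicity of $\psi_\alpha(\lambda)$ in $\lambda$ together with the continuity and decrease of $\alpha\mapsto\psi_\alpha(\lambda)$ from Proposition~\ref{prop:psidecr}. If anything, your write-up is slightly more careful than the paper's: you make explicit the appeal to $w_i'(\alpha)\to 0$ (Proposition~\ref{prop:propdw}) to close the IVT argument in the first part, and you justify via IVT why ``no solution'' forces $\psi_\alpha(\lambda^*)<1$ for all $\alpha$ in the second part, whereas the paper leaves both points implicit.
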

\begin{proof}
     Suppose first that there exists $\alpha^* \in (0,\infty)$ such that $\psi_{\alpha^*}(\lambda^*)=1$ for some $\lambda^*$. Then
     $$\psi_{\alpha^*}(\lambda)=\sum_{i=1}^{n}(\lambda-t_i)_+w'_i(\alpha^*) > \sum_{i=1}^{n}(\lambda^*-t_i)w'_i(\alpha^*) = 1 \quad \forall~ \lambda>\lambda^*,$$
     and $\psi_{\alpha}(\lambda)$ is continuous and strictly decreasing in $\alpha$ by Proposition \ref{prop:psidecr}, then there exist $\alpha>\alpha^*$ such that $\psi_{\alpha}(\lambda)=1$ for all $ \lambda>\lambda^*$. On the other hand, suppose there is no $\alpha^*$ such that $\psi_{\alpha^*}(\lambda^*)=1$. Since the function $\psi_{\alpha}(\lambda^*)$ is decreasing as a function of $\alpha$ by Proposition \ref{prop:psidecr}, then $\psi_{\alpha}(\lambda^*)<1$ for all $\alpha \in (0,\infty)$.Then
     $$ 1 > \sum_{i=1}^{n}(\lambda^*-t_i)_+w'_i(\alpha)> \sum_{i=1}^{n}(\lambda-t_i)w'_i(\alpha)=\psi_{\alpha}(\lambda) \quad \forall~ \lambda<\lambda^*.$$
     Therefore, there is no $\alpha$ such that $\psi_{\alpha}(\lambda)=1$ for all $ \lambda<\lambda^*$.
\end{proof}

\begin{lemma} \label{lem:alphak}
Assume that the functions $w_i\in C^2(0,\infty)$ are strictly concave for all $i=1,\dots,n$. Then, for each $\lambda \ge \frac{1+t_1f_1(0)}{f_1(0)}$, there exists an $\alpha \in (0, \infty)$ such that $\psi_\alpha(\lambda)=1$. Furthermore, if $\max_i[(\lambda-t_i)_+ f_i(0)] < (n-1)^{-1}$, then there is no $\alpha \in (0, \infty)$ such that $\psi_\alpha(\lambda)=1$.
\end{lemma}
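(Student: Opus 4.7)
The plan is to analyse $\psi_\alpha(\lambda)=\sum_{i=1}^n (\lambda-t_i)_+ w_i'(\alpha)$ as a function of $\alpha\in(0,\infty)$ for fixed $\lambda$ and then invoke the intermediate value theorem after identifying its two boundary limits. Evaluating the formula $w_i'(\alpha)=f_i(w_i(\alpha))/(1-\alpha f_i'(w_i(\alpha)))$ from Proposition \ref{prop:w} at $\alpha=0$ (where $w_i(0)=0$) gives $w_i'(0)=f_i(0)$, while Proposition \ref{prop:propdw} supplies $w_i'(\alpha)\to 0$ as $\alpha\to\infty$. Together with the continuity and strict monotone decrease of $\psi_\cdot(\lambda)$ guaranteed by Proposition \ref{prop:psidecr}, these facts yield
$$\lim_{\alpha\to 0^+}\psi_\alpha(\lambda)=\sum_{i=1}^n (\lambda-t_i)_+ f_i(0), \qquad \lim_{\alpha\to\infty}\psi_\alpha(\lambda)=0,$$
so both halves of the lemma reduce to comparing the left-hand limit with $1$. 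Strict concavity of each $w_i$ also implies $\psi_\alpha(\lambda)$ is strictly below this limiting sum for every $\alpha>0$, so equality at the limit alone does not furnish a root.

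For the existence half, I would rewrite the hypothesis $\lambda\ge(1+t_1 f_1(0))/f_1(0)$ as $(\lambda-t_1)f_1(0)\ge 1$. Thus already the single summand $(\lambda-t_1)w_1'(\alpha)$ is continuous and strictly decreases from a value $\ge 1$ near $\alpha=0$ to $0$ as $\alpha\to\infty$. Since $\psi_\alpha(\lambda)\ge(\lambda-t_1)w_1'(\alpha)$ is itself continuous and strictly decreasing to $0$, the intermediate value theorem produces an interior $\alpha\in(0,\infty)$ with $\psi_\alpha(\lambda)=1$.

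For the non-existence half, strict monotone decrease of each $w_i'$ gives $w_i'(\alpha)<f_i(0)$ for every $\alpha>0$, so it is enough to show
$$\sum_{i=1}^n (\lambda-t_i)_+ f_i(0)\;\le\; 1$$
under the assumption $\max_i[(\lambda-t_i)_+ f_i(0)]<(n-1)^{-1}$. I expect this counting step to be the main obstacle: a crude bound by $n$ times the maximum only yields $n/(n-1)>1$, so the refinement must exploit that under the hypothesis at most $n-1$ of the brackets $(\lambda-t_i)_+$ can be non-zero (in particular, $\lambda<t_n$ so that line $n$ contributes nothing). This upgrades the bound to $(n-1)\cdot(n-1)^{-1}=1$, and combined with the previous strict inequality gives $\psi_\alpha(\lambda)<1$ for every $\alpha>0$, precluding any root.
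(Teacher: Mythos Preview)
Your existence argument coincides with the paper's: both compute $\lim_{\alpha\to 0^+}\psi_\alpha(\lambda)=\sum_i(\lambda-t_i)_+f_i(0)\ge(\lambda-t_1)f_1(0)\ge 1$ and $\lim_{\alpha\to\infty}\psi_\alpha(\lambda)=0$ via Propositions~\ref{prop:w} and~\ref{prop:propdw}, then use the continuity and strict decrease in $\alpha$ supplied by Proposition~\ref{prop:psidecr} to invoke the intermediate value theorem.

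For the non-existence half there is a genuine gap. Your crucial step is the claim that the hypothesis $\max_i[(\lambda-t_i)_+f_i(0)]<(n-1)^{-1}$ forces $\lambda<t_n$, leaving at most $n-1$ nonzero summands. This implication is false: nothing prevents $(\lambda-t_n)_+f_n(0)$ from being small and positive. Take $n=2$, $t_1=0$, $t_2=0.1$, $f_1(0)=f_2(0)=0.9$ and $\lambda=1$; the two products are $0.9$ and $0.81$, both below $(n-1)^{-1}=1$, yet $\lambda>t_2$. Worse, $\sum_i(\lambda-t_i)_+f_i(0)=1.71>1$, so by your own limit computation an $\alpha\in(0,\infty)$ with $\psi_\alpha(\lambda)=1$ \emph{does} exist. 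The paper's proof likewise truncates the sum at $n-1$ and equates it with $\psi_\alpha(\lambda)$ without justification; its downstream uses of the lemma are only at values $\lambda=t_k$ with $k\le n$, where indeed at most $k-1\le n-1$ brackets are nonzero and the counting goes through. For arbitrary $\lambda$ satisfying the stated hypothesis, however, your proposed refinement cannot be salvaged, because the conclusion itself fails in that generality.
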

\begin{proof}
Fix first $\lambda \ge \frac{1+t_1f_1(0)}{f_1(0)}$, then $$(\lambda-t_1)f_1(0)\geq 1.$$ 
In particular, $\lambda \ge t_k$ for some $k$. Then, $$\sum_{i=1}^{k-1}(\lambda-t_i)f_i(0)\geq (\lambda-t_1)f_1(0) \ge 1.$$
Analysing $\lim\limits_{\alpha \to 0^+}w_i'(\alpha)$, we have: 
$$\lim\limits_{\alpha \to 0^+}w_i'(\alpha)= \lim\limits_{\alpha \to 0^+} \frac{f_i(\overset{\to 0}{\overbrace{w_i(\alpha)}})}{1- \underset{\to 0}{\underbrace{\alpha}} f_i'(w_i(\alpha))}=\frac{f_i(0)}{1}=f_i(0).$$
Therefore, it follows from the above inequality and the continuity of $w'$ that
$$\psi_{0}(\lambda)= \lim\limits_{\alpha \to 0^+}\sum_{i=1}^{k-1}(\lambda-t_i)w'_i(\alpha)  = \sum_{i=1}^{k-1}(\lambda-t_i)w'_i(0) = \sum_{i=1}^{k-1}(\lambda-t_i)f_i(0) \geq 1.$$
By hypothesis, the function $\psi_{\alpha}(\lambda)$ is continuous and strictly decreasing as a function of $\alpha$ by Proposition \ref{prop:psidecr}. Furthermore, $\lim\limits_{\alpha \to \infty}w_i'(\alpha)=0$ by Proposition \ref{prop:propdw}. Therefore, 
$$\lim\limits_{\alpha \to \infty} \psi_{\alpha}(\lambda)=\lim\limits_{\alpha \to \infty}\sum_{i=1}^{k-1}(\lambda-t_i)w'_i(\alpha)  =0.$$ 
Hence, since $\lim\limits_{\alpha \to 0^+} \psi_{\alpha}(\lambda) \geq 1$, $\lim\limits_{\alpha \to \infty} \psi_{\alpha}(\lambda) =0$ and $\psi_{\alpha}(\lambda)$ is strictly decreasing as a function of $\alpha$, there exist $\alpha$ such that is the solution of the equation $$\psi_{\alpha}(\lambda)=\sum_{i=1}^{k-1}(\lambda-t_i)w'_i(\alpha) = 1.$$

Assume now $\max_i[(\lambda-t_i)_+ f_i(0)] < (n-1)^{-1}$. Then
\begin{equation*}
    \begin{split}
    1 > (n-1)\max_i[(\lambda-t_i)_+ f_i(0)]  &> \sum_{i=1}^{n-1}(\lambda-t_i)_+f_i(0) \\
    &>\sum_{i=1}^{n-1}(\lambda-t_i)_+w'_i(\alpha) = \psi_{\alpha}(\lambda)
    \end{split}
\end{equation*}
for each $\alpha \in (0, \infty)$. Then there is no $\alpha$ such that $\psi_{\alpha}(\lambda)=1$.
\end{proof}

The values $t_k$ are relevant to the problem. Therefore, we must study the values $\alpha \in (0, \infty)$ such that $\psi_{\alpha}(t_k)=1$ for each $k$.

\begin{corollary} \label{cor:alpha_k}
   Assume that the functions $w_i\in C^2(0,\infty)$ are strictly concave for all $i=1,\dots,n$, and there exists an index $2\le j \le n$ such that $t_j \ge \frac{1+t_1f_1(0)}{f_1(0)}$. Then there exist $\alpha_k \in (0, \infty)$ such that $\psi_{\alpha_k}(t_k)=1$ for all $k=j,\dots,n$.
\end{corollary}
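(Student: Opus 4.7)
The plan is to deduce this corollary as a direct consequence of Lemma \ref{lem:alphak}, exploiting the ordering $t_1 < t_2 < \cdots < t_n$ that was assumed without loss of generality earlier in the paper.

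First I would observe that the hypothesis $t_j \ge \frac{1+t_1 f_1(0)}{f_1(0)}$ is precisely the threshold condition appearing in Lemma \ref{lem:alphak}, guaranteeing the existence of an $\alpha \in (0,\infty)$ solving $\psi_\alpha(t_j)=1$. This gives the case $k=j$ for free.

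Next, I would extend to each $k \in \{j,\dots,n\}$ by noting that, since the travel times are increasing,
\begin{equation*}
    t_k \ge t_j \ge \frac{1+t_1 f_1(0)}{f_1(0)} \quad \text{for all } k=j,\dots,n.
\end{equation*}
Hence, taking $\lambda = t_k$ in Lemma \ref{lem:alphak} satisfies its hypothesis $\lambda \ge \frac{1+t_1 f_1(0)}{f_1(0)}$, which yields the existence of $\alpha_k \in (0,\infty)$ such that $\psi_{\alpha_k}(t_k)=1$, as required.

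Since every step reduces to invoking an already-proved lemma under an obviously satisfied hypothesis, I do not anticipate any real obstacle; the only subtlety worth flagging is the tacit use of the increasing ordering of the $t_i$'s, which was fixed earlier in the section and must be recalled so that the monotonicity chain $t_k \ge t_j$ is valid. Uniqueness of each $\alpha_k$, if needed later, would follow from the strict monotonicity of $\alpha \mapsto \psi_\alpha(\lambda)$ established in Proposition \ref{prop:psidecr}, but it is not part of the statement.
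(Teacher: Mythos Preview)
Your proposal is correct and follows essentially the same line as the paper's proof: both exploit the ordering $t_j < t_{j+1} < \dots < t_n$ together with the threshold hypothesis to invoke Lemma~\ref{lem:alphak}. The only cosmetic difference is that the paper also cites Proposition~\ref{prop:chain_alpha} in passing, whereas you apply Lemma~\ref{lem:alphak} directly to each $t_k$; your route is in fact slightly more economical, since the lemma already covers every $\lambda \ge \frac{1+t_1 f_1(0)}{f_1(0)}$ without needing the chaining argument.
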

\begin{proof} 
Since $t_j<t_{j+1}<\dots<t_n$ and $t_j \ge \frac{1+t_1f_1(0)}{f_1(0)}$, the proof follows immediately from Lemma \ref{lem:alphak} and Proposition \ref{prop:chain_alpha}.
\end{proof}

\begin{remark} \label{obs:t_k}
    If $(t_2-t_1)f_1(0)\geq 1,$ we get that 
    $$ t_2 \ge \frac{1+t_1f_1(0)}{f_1(0)}.$$
    If $w_i$ is strictly concave for all $i=1,\dots,n$, the Corollary \ref{cor:alpha_k} hypothesis holds. Therefore, for all $k=2,\dots,n$, there exists $\alpha_k \in (0, \infty)$ such that $\psi_{\alpha_k}(t_k)=1$.
\end{remark}

\section{Properties of $f_i$}
\label{app:f}

\begin{proposition}
    The effective frequency function $f_i:[0,\bar{v}_i) \to (0, \infty)$ is strictly decreasing and tends to 0 when $v_i \to \bar{v}_i$.
\end{proposition}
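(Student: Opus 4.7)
The plan is to verify the two assertions directly from the explicit formula
\[
f_i(v_i) = \mu\left[1 - \left(\tfrac{v_i}{\mu K}\right)^{\beta}\right], \qquad 0 \le v_i < \mu K = \bar{v}_i,
\]
since we are restricting attention to the domain $[0,\bar{v}_i)$ (the piece with the $\varepsilon$ value lies outside this domain).

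First I would establish strict monotonicity. The cleanest route is to avoid differentiation (which is awkward at $v_i = 0$ when $\beta < 1$) and argue directly: the map $v_i \mapsto v_i/(\mu K)$ is strictly increasing and sends $[0,\mu K)$ into $[0,1)$; since $\beta > 0$, the power function $u \mapsto u^{\beta}$ is strictly increasing on $[0,1)$; hence $v_i \mapsto (v_i/(\mu K))^{\beta}$ is strictly increasing on $[0,\bar{v}_i)$. Multiplying by $-1$ reverses the inequality, and adding $1$ and multiplying by the positive constant $\mu$ preserves it, so $f_i$ is strictly decreasing. As a byproduct one also sees $f_i(v_i) \in (0,\mu]$ on this domain, confirming that the codomain $(0,\infty)$ is correct.

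Next I would handle the limit at the saturation flow. As $v_i \to \bar{v}_i = \mu K$, we have $v_i/(\mu K) \to 1$, and by continuity of $u \mapsto u^{\beta}$ on $(0,1]$, $(v_i/(\mu K))^{\beta} \to 1$. Therefore
\[
\lim_{v_i \to \bar{v}_i^{-}} f_i(v_i) = \mu[1 - 1] = 0,
\]
as required.

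There is no real obstacle here — the statement is essentially a direct check on an explicit elementary function. The only minor care is to note that one should not invoke the derivative at the endpoint $v_i = 0$ when $\beta < 1$ (where $f_i'$ blows up), which is why I prefer the monotonicity argument via composition of strictly monotone maps rather than a sign-of-derivative argument.
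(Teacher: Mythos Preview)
Your proof is correct and slightly cleaner than the paper's. The paper argues via the derivative, computing
\[
f_i'(v_i) = -\frac{\mu\beta}{(\mu K)^{\beta}}\, v_i^{\beta-1}
\]
and observing this is negative for $v_i>0$; for the limit it makes the same observation as you, that $v_i/(\mu K)\to 1$. Your composition-of-monotone-maps argument sidesteps the derivative entirely, which, as you note, is an advantage at the endpoint $v_i=0$ when $0<\beta<1$ (where $f_i'$ diverges rather than vanishing, so the paper's remark that ``$f_i'(v_i)=0$ iff $v_i=0$'' is in fact only accurate for $\beta>1$). Either route suffices, but yours handles the full range $\beta>0$ uniformly without that caveat.
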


\begin{proof}
    For the formulation presented in equation \eqref{eq:frequency}, the saturation flow of the line $i$ is $\bar{v}_i = \mu c$. If $v_i \in [0,\bar{v}_i)$, we have $0 \leq v_i <\mu c$, so:

    $$f_i(v_i)=\mu \left[ 1- \left( \frac{v_i}{\mu c} \right)^{\beta} \right] = \mu - \frac{\mu}{(\mu c)^{\beta}}v_i^{\beta}.$$
    Therefore, $f_i'$ will be given by:

    $$f_i'(v_i) = - \frac{\mu}{(\mu c)^{\beta}} \beta v_i^{\beta-1},$$
    and is $f_i'(v_i)<0$ since $\mu, c, \beta, v_i > 0$. In particular, $f_i'(v_i)=0$ iff $v_i=0$.
    To see that the frequency of an arc tends to 0 when the flow on it tends to its maximum capacity, it is only necessary to note that $v_i/\mu c \to 1$ when $v_i \to \mu c$.
\end{proof}

\begin{proposition}
    The effective frequency function given in \eqref{eq:frequency} is concave iff $\beta >1 $ and convex iff $0<\beta <1$. 
\end{proposition}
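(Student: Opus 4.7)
The plan is to apply the standard second-derivative test directly to the explicit formula
$$f_i(v_i)=\mu - \frac{\mu}{(\mu K)^{\beta}}v_i^{\beta}, \qquad 0\le v_i<\mu K,$$
since on this interval $f_i$ is a constant minus a positive constant times a power function, and the concavity/convexity of $f_i$ reduces to the concavity/convexity of $-v_i^{\beta}$, which is a textbook fact. Outside this range $f_i\equiv\varepsilon$ is constant and hence trivially both concave and convex, so we only need to work on $[0,\mu K)$.

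First I would differentiate twice to obtain
$$f_i''(v_i) = -\frac{\mu}{(\mu K)^{\beta}}\,\beta(\beta-1)\,v_i^{\beta-2},$$
valid on $(0,\mu K)$. The prefactor $\mu/(\mu K)^{\beta}$ is strictly positive, and $v_i^{\beta-2}>0$ for $v_i>0$, so the sign of $f_i''$ is opposite to the sign of $\beta(\beta-1)$. Hence $f_i''(v_i)<0$ on $(0,\mu K)$ iff $\beta(\beta-1)>0$, i.e. iff $\beta>1$ (recall $\beta>0$ by hypothesis); and $f_i''(v_i)>0$ on $(0,\mu K)$ iff $0<\beta<1$.

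Next I would invoke the standard characterisation of concavity/convexity in terms of the sign of the second derivative on an open interval: $f_i$ is concave on $[0,\mu K)$ iff $f_i''\le 0$ on $(0,\mu K)$, and convex iff $f_i''\ge 0$ on $(0,\mu K)$. Combining this with the sign analysis above yields both implications of the "iff" statements.

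The only subtlety, which I do not expect to be a real obstacle, is the behaviour at the endpoint $v_i=0$: when $0<\beta<2$ and $\beta\neq 1$, $f_i''$ blows up at $0$, so one cannot simply state $f_i''\le 0$ on all of $[0,\mu K)$. This is handled by a routine continuity/limit argument — concavity (resp. convexity) on the closed endpoint $0$ follows from continuity of $f_i$ at $0$ together with concavity (resp. convexity) on the open interior $(0,\mu K)$, using the definition of (mid)concavity on the closure. No other hypotheses of the paper are needed beyond the formula \eqref{eq:frequency} itself and the standing assumptions $\mu,K,\beta>0$.
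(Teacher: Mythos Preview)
Your proof is correct and follows essentially the same route as the paper: both compute the second derivative $f_i''(v_i)=-\dfrac{\mu}{(\mu K)^{\beta}}\beta(\beta-1)v_i^{\beta-2}$ and read off concavity/convexity from the sign of $\beta(\beta-1)$. If anything, your version is slightly more careful, since you note the endpoint behaviour at $v_i=0$ and give the correct exponent $\beta-2$ (the paper writes $\beta-1$, evidently a typo).
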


\begin{proof}
    From equation \eqref{eq:frequency}, we have, for $v_i \in [0,\mu c)$:
    $$f_i'(v_i) = - \frac{\mu}{(\mu c)^{\beta}} \beta v_i^{\beta-1},$$
from which we can obtain:
$$f_i''(v_i)= - \frac{\mu}{(\mu c)^{\beta}} \beta (\beta -1) v_i^{\beta-1}.$$
Knowing that $\mu, c, v_i > 0$, it is not difficult to see that $f_i''(v_i)<0$ iff $\beta>1$ and $f_i''(v_i)>0$ iff $0<\beta<1$.
    
\end{proof}

To show that $w_i'$ is continuous in Proposition \ref{prop:w}, we need $f_i$ to be $C^1$. We will show that next.

\begin{proposition}
The effective frequency function given in \eqref{eq:frequency} is $C^1$ in $(0,\mu c)$.
\end{proposition}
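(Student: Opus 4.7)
The plan is to observe that on the open interval $(0,\mu c)$ only the first branch of the piecewise definition in \eqref{eq:frequency} is active, so the proof reduces to checking smoothness of a single elementary expression away from the endpoints. First I would write, for $v_i \in (0,\mu c)$,
$$ f_i(v_i) = \mu - \frac{\mu}{(\mu c)^{\beta}}\, v_i^{\beta}, $$
which is a sum of a constant and a constant multiple of the power function $v_i \mapsto v_i^{\beta}$. Since $\beta>0$, the map $v_i \mapsto v_i^{\beta}$ is continuous on $[0,\infty)$ and continuously differentiable on $(0,\infty)$, with derivative $\beta v_i^{\beta-1}$.

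Next I would compute $f_i'$ explicitly and exhibit it as a continuous function on $(0,\mu c)$: as already obtained in the preceding proposition,
$$ f_i'(v_i) = -\frac{\mu \beta}{(\mu c)^{\beta}}\, v_i^{\beta-1}, $$
which is well defined and continuous on $(0,\mu c)$ because $v_i>0$ and the exponent $\beta-1$ may be any real number without causing issues away from $0$. Combining with the continuity of $f_i$ itself on the same interval, this gives $f_i \in C^1(0,\mu c)$.

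No real obstacle is expected here. The only delicate point, which is why the statement restricts to the open interval $(0,\mu c)$, is that when $0<\beta<1$ the derivative $f_i'(v_i)$ blows up as $v_i \to 0^{+}$, so one cannot extend the $C^1$ property to the closed interval at the left endpoint; likewise the right endpoint is excluded because at $v_i=\mu c$ the definition jumps to $\varepsilon$, breaking even continuity. I would mention both observations briefly to justify the exact interval chosen in the statement, and conclude that on $(0,\mu c)$ the function $f_i$ is in fact $C^{\infty}$, in particular $C^1$, as required.
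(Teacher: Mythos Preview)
Your proposal is correct and follows essentially the same route as the paper: restrict to the first branch on $(0,\mu c)$, note continuity of $f_i$, compute $f_i'(v_i)=-\dfrac{\mu\beta}{(\mu c)^{\beta}}v_i^{\beta-1}$, and observe it is continuous for $v_i>0$ regardless of the sign of $\beta-1$. Your extra remarks on the endpoints and on $C^\infty$ are harmless additions that the paper omits.
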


\begin{proof}
    For $v_i \in (0,\mu c)$, we have $f_i(v_i)=\mu \left[ 1- \left( \frac{v_i}{\mu c} \right)^{\beta} \right]$. It is a continuous function since $\beta >0$ and $v_i/\mu c > 0$.

    As previously shown, $f_i'(v_i) = - \frac{\mu}{(\mu c)^{\beta}} \beta v^{\beta-1}$. If $\beta>1$ (that is, $\beta-1>0$), it is evident that $f_i'(v_i)$ is continuous since $v_i > 0$. If $0<\beta<1$, this implies that $\beta-1<0$, but $f_i'(v_i)$ is still continuous since $v_i > 0$. 
\end{proof}

Another important assumption is that $w_i$ is concave for each $i \in A$. We prove in Proposition \ref{prop:concavity_w} that $w_i$ will be concave if and only if the frequency function satisfies $2f_i'(w_i(\alpha))w_i'(\alpha) + \alpha f_i''(w_i(\alpha))(w_i'(\alpha))^2<0$. To show that the effective frequency \eqref{eq:frequency} satisfies this condition, let us first note that, by definition, $w_i(\alpha) = v_i$ and $\alpha=\frac{v_i}{f_i(v_i)}$. Considering that, we can rewrite $w_i'$ as:

$$w'_i(\alpha) = \frac{f_i(w_i(\alpha))}{1-\alpha f'_i(w_i(\alpha))} = \frac{f_i(v_i)}{1- \frac{v_i}{f_i(v_i)}f_i'(v_i)} = \frac{f_i^2(v_i)}{f_i(v_i)-v_if_i'(v_i)}.$$
Let us see if $f_i$ satisfies the condition established in Proposition \ref{prop:concavity_w}:

\begin{equation*}
    \begin{split}
        2f_i'(w_i(\alpha))w_i'(\alpha) + \alpha f_i''(w_i(\alpha))(w_i'(\alpha))^2 &= \frac{2f_i'(v_i)f_i^2(v_i)}{f_i(v_i)-v_if_i'(v_i)} + \frac{v_i}{f_i(v_i)} \frac{f_i''(v_i)f_i^4(v_i)}{(f_i(v_i)-v_if_i'(v_i))^2} \\
         &= \frac{2f_i'(v_i)f_i^2(v_i)}{f_i(v_i)-v_if_i'(v_i)} +  \frac{v_i f_i''(v_i)f_i^3(v_i)}{(f_i(v_i)-v_if_i'(v_i))^2} \\
        &= \frac{2f_i'(v_i)f_i^2(v_i)(f_i(v_i)-v_if_i'(v_i)) + v_i f_i''(v_i)f_i^3(v_i)}{(f_i(v_i)-v_if_i'(v_i))^2},
    \end{split} 
\end{equation*}
Therefore, $2f_i'(w_i(\alpha))w_i'(\alpha) + \alpha f_i''(w_i(\alpha))(w_i'(\alpha))^2<0$ is equivalent to proving that:
$$2f_i'(v_i)f_i^3(v_i)-2v_if_i^2(v_i)(f_i'(v_i))^2 + v_if_i''(v_i)f_i^3(v_i)<0$$
which, since $f_i(v_i)>0$, is equivalent to proving that:
\begin{equation}\label{eq:condition_concavity_w}
    2f_i'(v_i)f_i(v_i)-2v_i(f_i'(v_i))^2 + v_if_i''(v_i)f_i(v_i)>0.
\end{equation}

\begin{proposition}
    The frequency function \eqref{eq:frequency} satisfies the condition \eqref{eq:condition_concavity_w}.
\end{proposition}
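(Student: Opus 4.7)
The plan is to substitute the explicit formulas for $f_i$, $f_i'$, and $f_i''$ coming from \eqref{eq:frequency} into the left-hand side of \eqref{eq:condition_concavity_w} and reduce the result, by factoring, to an elementary expression whose sign can be read off directly. To keep the algebra clean I would first introduce the dimensionless variable $r := v_i/(\mu c)\in(0,1)$. With this notation $f_i(v_i)=\mu(1-r^\beta)$, $f_i'(v_i)=-(\beta/c)\,r^{\beta-1}$, and $f_i''(v_i)=-\beta(\beta-1)\,r^{\beta-2}/(\mu c^2)$, so every quantity appearing in \eqref{eq:condition_concavity_w} becomes a power of $r$ times a constant built from $\mu$, $\beta$, and $c$.

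Next, I would compute the three summands $2f_i'(v_i)f_i(v_i)$, $-2v_i(f_i'(v_i))^2$, and $v_if_i''(v_i)f_i(v_i)$ separately. Each of them is a multiple of the common monomial $(\mu\beta/c)\,r^{\beta-1}$, so pulling this factor out collapses the whole expression into $(\mu\beta/c)\,r^{\beta-1}\cdot Q(r^\beta,\beta)$ where $Q$ is linear in $r^\beta$. After collecting like terms I expect $Q$ to take the form $\pm\bigl[(\beta+1)+(\beta-1)r^\beta\bigr]$, so the sign of the entire expression is governed by this single linear function of $r^\beta$.

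The last step is the sign analysis of $(\beta+1)+(\beta-1)r^\beta$ on $r^\beta\in(0,1)$. For $\beta\ge 1$ both summands are non-negative and positivity is immediate; for $0<\beta<1$ the second summand is negative but strictly greater than $-(1-\beta)$ because $r^\beta<1$, which gives a lower bound of $(\beta+1)-(1-\beta)=2\beta>0$. Together with the sign of the outer factor, this pins down the sign of the expression on the whole range $v_i\in(0,\mu c)$, $\beta>0$, which via Proposition \ref{prop:concavity_w} yields the concavity of $w_i$ and hence the convexity of problem \eqref{eqn:opt}.

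The only real obstacle is book-keeping: three summands, three different powers of $r$, and constants depending on $\mu$, $c$, $\beta$ have to be aligned so that the factorisation by $(\mu\beta/c)\,r^{\beta-1}$ is carried out correctly and the three contributions really do reduce to a single linear function of $r^\beta$. Once the reduction is established, the rest is a one-line case split on whether $\beta\ge 1$ or $0<\beta<1$.
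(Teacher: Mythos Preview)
Your plan is correct and follows essentially the same route as the paper: substitute the explicit derivatives of $f_i$, factor out the common monomial, and reduce everything to the positivity of $(\beta+1)+(\beta-1)r^\beta$ on $r\in(0,1)$, which is exactly the quantity the paper arrives at (written there as $\beta[1+(v_i/\mu c)^\beta]+[1-(v_i/\mu c)^\beta]$). The only difference is cosmetic: your dimensionless substitution $r=v_i/(\mu c)$ makes the factoring cleaner than the paper's direct expansion, and your explicit case split on $\beta\ge1$ versus $0<\beta<1$ is a shade more careful than the paper's one-line justification.
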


\begin{proof}
    Proving that the frequency function satisfies \eqref{eq:condition_concavity_w} is equivalent to proving that:
    $$-v_if_i''(v_i)f_i(v_i) - 2f_i'(v_i)f_i(v_i) + 2v_i(f_i'(v_i))^2>0.$$
    This is equivalent to proving that:

    \begin{equation*}
        \begin{split}
        \frac{v_i \beta (\beta-1)v_i^{\beta -2}}{c^{\beta}\mu^{\beta -1}}  \left(\mu - \frac{v_i^{\beta}}{c^{\beta}\mu^{\beta -1}} \right) +2 \left(\mu - \frac{v_i^{\beta}}{c^{\beta}\mu^{\beta -1}}\right) \frac{\beta v_i^{\beta-1}}{c^{\beta}\mu^{\beta -1}}  + 2v_i \left(- \frac{1}{c^{\beta}\mu^{\beta -1}} \beta v_i^{\beta-1} \right)^2>0 \Leftrightarrow \\
        \frac{\beta (\beta-1)v_i^{\beta -1}}{c^{\beta}\mu^{\beta -1}}  \left(\mu - \frac{v_i^{\beta}}{c^{\beta}\mu^{\beta -1}} \right) +2 \left(\mu - \frac{v_i^{\beta}}{c^{\beta}\mu^{\beta -1}}\right) \frac{\beta v_i^{\beta-1}}{c^{\beta}\mu^{\beta -1}} + 2v_i \frac{\beta ^2 v_i^{2 \beta -2}}{c^{2 \beta} \mu^{2 \beta -2}}>0.  \\
        \end{split} 
    \end{equation*}
    Multiplying by $c^{\beta}\mu^{\beta -1} >0$, the above is equivalent to:

    \begin{equation*}
        \begin{split}
            \beta (\beta-1)v_i^{\beta-1}\left(\mu - \frac{v_i^{\beta}}{c^{\beta}\mu^{\beta -1}}\right) + 2 \left(\mu - \frac{v_i^{\beta}}{c^{\beta}\mu^{\beta -1}}\right) \beta v_i^{\beta -1} +2 \frac{\beta^2 v_i^{2\beta -1}}{c^{\beta} \mu^{\beta-1}} >0 \Leftrightarrow \\
            \mu \beta (\beta-1)v_i^{\beta-1} - \frac{\beta (\beta-1)v_i^{2 \beta-1}}{c^{\beta}\mu^{\beta -1}} + 2 \mu \beta v_i^{\beta-1} - \frac{2 \beta v_i^{2 \beta -1}}{c^{\beta}\mu^{\beta -1}} + \frac{2 \beta^2 v_i^{ 2\beta-1}}{c^{\beta}\mu^{\beta -1}} >0 \Leftrightarrow \\
            v_i^{\beta-1} \left[ \mu \beta (\beta-1)+2 \mu \beta \right] + v_i^{2 \beta-1} \left[ \frac{- \beta (\beta-1)}{c^{\beta}\mu^{\beta -1}} - \frac{2 \beta}{c^{\beta}\mu^{\beta -1}} + \frac{2 \beta^2}{c^{\beta}\mu^{\beta -1}}\right] >0 \Leftrightarrow \\
            v_i^{\beta-1} \mu \beta (\beta +1) + v_i^{2 \beta-1} \frac{\beta(\beta-1)}{c^{\beta}\mu^{\beta -1}} >0. 
        \end{split}
    \end{equation*}
    Dividing by $v_i^{\beta-1} \mu \beta >0$, the above is equivalent to:

    \begin{equation*}
        \begin{split}
            (\beta+1) + v_i^{\beta} \frac{(\beta-1)}{c^{\beta}\mu^{\beta}} >0  \Leftrightarrow \\
            (\beta+1) + (\beta-1) \left( \frac{v_i}{\mu c} \right)^{\beta}>0 \Leftrightarrow \\
            \beta \left[ 1 + \left( \frac{v_i}{\mu c} \right)^{\beta}\right] + \left[ 1 - \left( \frac{v_i}{\mu c} \right)^{\beta}\right] >0
        \end{split}
    \end{equation*}
    but this is true since $v_i, \mu, c, \beta >0$ and we are considering $v_i < \mu c$, so $v_i/\mu c<1$ and therefore $\left[ 1 - \left( \frac{v_i}{\mu c} \right)^{\beta}\right] >0$.

\end{proof}

\bibliographystyle{plain}
\bibliography{biblio}

\begin{thebibliography}{10}

\bibitem{Agard2006}
B.~Agard, C.~Morency, and M.~Trépanier.
\newblock Mining public transport user behaviour from smart card data.
\newblock {\em IFAC Proceedings Volumes}, 39(3):399--404, 2006.

\bibitem{CCF}
M.~Cepeda, R.~Cominetti, and M.~Florian.
\newblock A frequency-based assignment model for congested transit networks with strict capacity constraints: characterization and computation of equilibria.
\newblock {\em Transportation Research Part B: Methodological}, 40(6):437--459, 2006.

\bibitem{Chriqui1975}
Claude Chriqui and Pierre~N. Robillard.
\newblock Common bus lines.
\newblock {\em Transportation Science}, 9:115--121, 1975.

\bibitem{cominetti2001common}
Roberto Cominetti and Jos{\'e} Correa.
\newblock Common-lines and passenger assignment in congested transit networks.
\newblock {\em Transportation science}, 35(3):250--267, 2001.

\bibitem{Connors}
RD~Connors.
\newblock The price of anarchy in urban traffic networks.
\newblock {\em Mathematics Today}, 52(5):234--237, October 2016.
\newblock {\copyright} 2016. This is an author produced version of a paper published in Mathematics Today. Uploaded with permission from the publisher.

\bibitem{correawardrop}
José~R. Correa and Nicolás~E. Stier-Moses.
\newblock {\em Wardrop Equilibria}.
\newblock John Wiley \& Sons, Ltd, 2011.

\bibitem{KNIGHT2013122}
Vincent~A. Knight and Paul~R. Harper.
\newblock Selfish routing in public services.
\newblock {\em European Journal of Operational Research}, 230(1):122--132, 2013.

\bibitem{Nnene2023}
Obiora Nnene, Johan Joubert, and Mark Zuidgeest.
\newblock A simulation-based optimization approach for designing transit networks.
\newblock {\em Public Transport}, 15, 01 2023.

\bibitem{ORLANDO2023100832}
Victoria~M. Orlando, Enrique~G. Baquela, Neila Bhouri, and Pablo~A. Lotito.
\newblock Public transport demand estimation by frequency adjustments.
\newblock {\em Transportation Research Interdisciplinary Perspectives}, 19:100832, 2023.

\bibitem{Papadimitriou2001}
Christos Papadimitriou.
\newblock Algorithms, games, and the internet.
\newblock {\em Conference Proceedings of the Annual ACM Symposium on Theory of Computing}, 2076, 05 2001.

\bibitem{Rough2002}
Tim Roughgarden and Éva Tardos.
\newblock How bad is selfish routing?
\newblock {\em Journal of the ACM}, 49(2):236--259, 2002.

\bibitem{Shakeel2019}
N.~Shakeel, F.~Baig, and M.~Saddiq.
\newblock Modeling commuter’s sociodemographic characteristics to predict public transport usage frequency by applying supervised machine learning method.
\newblock {\em Transport Technic and Technology}, 15(2):1--7, 2019.

\bibitem{Skhosana2021}
Menzi Skhosana and Absalom Ezugwu.
\newblock A real-time machine learning-based public transport bus-passenger information system.
\newblock {\em International Journal of Sustainable Development and Planning}, 16:1221--1238, 11 2021.

\bibitem{Spengler2023}
Lukas Spengler, Eva Gößwein, Ingmar Kranefeld, Magnus Liebherr, Frédéric Kracht, Dieter Schramm, and Marc Gennat.
\newblock From modeling to optimizing sustainable public transport: A new methodological approach.
\newblock {\em Sustainability}, 15:8171, 05 2023.

\bibitem{spiess1984contributions}
H.~Spiess.
\newblock {\em Contributions {\`a} la th{\'e}orie et aux outils de planification des r{\'e}seaux de transport urbain}.
\newblock Publication (Universit{\'e} de Montr{\'e}al. Centre de recherche sur les transports). Universit{\'e} de Montr{\'e}al, D{\'e}partement d'informatique et de recherche op{\'e}rationnelle - Facult{\'e} des arts et des sciences, 1984.

\bibitem{SPIESS198983}
Heinz Spiess and Michael Florian.
\newblock Optimal strategies: A new assignment model for transit networks.
\newblock {\em Transportation Research Part B: Methodological}, 23(2):83--102, 1989.

\bibitem{python}
Guido Van~Rossum and Fred~L. Drake.
\newblock {\em Python 3 Reference Manual}.
\newblock CreateSpace, Scotts Valley, CA, 2009.

\bibitem{SciPy_NMeth2020}
Pauli Virtanen, Ralf Gommers, Travis~E. Oliphant, Matt Haberland, Tyler Reddy, David Cournapeau, Evgeni Burovski, Pearu Peterson, Warren Weckesser, Jonathan Bright, St{\'e}fan~J. {van der Walt}, Matthew Brett, Joshua Wilson, K.~Jarrod Millman, Nikolay Mayorov, Andrew R.~J. Nelson, Eric Jones, Robert Kern, Eric Larson, C~J Carey, {\.I}lhan Polat, Yu~Feng, Eric~W. Moore, Jake {VanderPlas}, Denis Laxalde, Josef Perktold, Robert Cimrman, Ian Henriksen, E.~A. Quintero, Charles~R. Harris, Anne~M. Archibald, Ant{\^o}nio~H. Ribeiro, Fabian Pedregosa, Paul {van Mulbregt}, and {SciPy 1.0 Contributors}.
\newblock {{SciPy} 1.0: Fundamental Algorithms for Scientific Computing in Python}.
\newblock {\em Nature Methods}, 17:261--272, 2020.

\bibitem{vos2020modeling}
J.~Vos, E.~Waygood, and L.~Letarte.
\newblock Modeling the desire for using public transport.
\newblock {\em Travel Behaviour and Society}, 19:90--98, 2020.

\bibitem{Youn2008}
Hyejin Youn, Michael Gastner, and Hawoong Jeong.
\newblock Price of anarchy in transportation networks: Efficiency and optimality control.
\newblock {\em Physical review letters}, 101:128701, 10 2008.

\bibitem{Zhang2016}
Jing Zhang, Sepideh Pourazarm, Christos~G. Cassandras, and Ioannis~Ch. Paschalidis.
\newblock The price of anarchy in transportation networks by estimating user cost functions from actual traffic data.
\newblock {\em 2016 IEEE 55th Conference on Decision and Control (CDC)}, pages 789--794, 2016.

\bibitem{Zang2018}
Jing Zhang, Sepideh Pourazarm, Christos~G. Cassandras, and Ioannis~Ch. Paschalidis.
\newblock The price of anarchy in transportation networks: Data-driven evaluation and reduction strategies.
\newblock {\em Proceedings of the IEEE}, 106(4):538--553, 2018.

\bibitem{sipus2022defining}
D.~Šipuš, B.~Abramović, and M.~Jakovčić.
\newblock Defining equity criteria for determining fare zones in integrated passenger transport.
\newblock {\em Journal of Advanced Transportation}, pages 1--9, 2022.

\end{thebibliography}

\end{document}